\theoremstyle{thmstyleone}%
\newtheorem{theorem}{Theorem}
\newtheorem{proposition}[theorem]{Proposition}%
\theoremstyle{thmstyletwo}%
\newtheorem{example}{Example}%
\newtheorem{remark}{Remark}%
\newtheorem{corollary}{Corollary}%
\newtheorem{lemma}{Lemma}%
\theoremstyle{thmstylethree}%
\begin{document}

\title[Article Title]{Supremal inequalities for convex M-estimators with applications to complete and quick convergence}


\author*[1]{\fnm{Dietmar} \sur{Ferger}}\email{dietmar.ferger@tu-dresden.de}



\affil*[1]{\orgdiv{Fakult\"{a}t Mathematik}, \orgname{Technische Universit\"{a}t Dresden}, \orgaddress{\street{Zellescher Weg 12-14}, \city{} \postcode{01062} \state{Dresden}, \country{Germany}}}




\abstract{Let $\hat m_n$ be an $M$-estimator for a parameter $m$ based on a sample of size $n \in \mathbb{N}$.
We derive exponential and polynomial upper bounds for the tail-probabilities of $\sup_{k \ge n}|\hat m_k -m|$
according as a boundedness- or a moment-condition is fulfilled. This enables us to derive rates of $r$-complete convergence
and also to show $r$-qick convergence in the sense of Strasser.}

\keywords{argmin, convex stochastic process, probability inequalities, rates of convergence}



\maketitle

\section{Introduction}\label{sec1}
In statistics many parameters $m$ of interest are defined or can be characterized as a minimizer of a certain convex criterion function
$M:\mathbb{R} \rightarrow \mathbb{R}$. In mathematical optimisation or decision theory $M$ is a loss function or a cost function
depending on some (real) value and the problem is to find that value, which minimizes the pertaining loss or costs, respectively. A huge class
of such functions $M$ is given by a bivariate function $$h:S \times \mathbb{R} \rightarrow \mathbb{R}.$$
If $(S,\mathcal{S},Q)$ is a probability space, then $M$ is given by
\begin{equation} \label{M}
 M(t):= \int_S h(x,t)-h(x,t_0) Q(dx), \; t \in \mathbb{R},
\end{equation}
where $t_0$ is any fixed real number.
To ensure the existence of $M$ we assume that $h(\cdot,t):S \rightarrow \mathbb{R}$ is $\mathcal{S}$-Borel measurable for
every $t \in \mathbb{R}$ and further that $h(x,\cdot):\mathbb{R} \rightarrow \mathbb{R}$ is convex for every $x \in S$. Moreover, suppose that
\begin{equation} \label{ex}
 \int_S |D^+h(x,t)| Q(dx) < \infty  \text{ and } \int_S |D^-h(x,t)| Q(dx) < \infty\quad \forall \; t \in \mathbb{R}.
\end{equation}
Here, $D^+h(x,t)$ and $D^-h(x,t)$ denote the right derivative and the left derivative of $h(x,\cdot):\mathbb{R} \rightarrow \mathbb{R}$ at point $t \in \mathbb{R}$. Then by Proposition 2.1 of Ferger \cite{Ferger3} the criterion function $M$ exists and furthermore it is real-valued and convex.\\

\begin{remark}
Actually, one is interested in the minimizing point $m$ of $M_0$ defined by $M_0(t):=\int_S h(x,t) Q(dx)$. Since $M_0$ also has $m$ as its minimizer, both are suitable as criterion functions, although $M$ seems unnecessarily complicated and therefore less intuitive. The decisive reason for choosing $M$ is that $M_0$ only exists as a real-valued function, if and only if
\begin{equation} \label{sex}
\int_S |h(x,t)| Q(dx)< \infty \text{ for all } \quad \forall \; t \in \mathbb{R}.
\end{equation}
However, this requirement is strictly stronger than condition (\ref{ex}) as shown in Lemma 2.2 of Ferger \cite{Ferger3}. In fact, requirement (\ref{ex}) proves to be much weaker than (\ref{sex}). For instance, if $h(x,t):= |t-x|^p$ with $(x,t) \in \mathbb{R}^2$ and $p \ge 1$, then $M_0$ exists if (and only if) $\int |x|^p Q(dx) < \infty$, whereas $M$ exists if $\int |x|^{p-1} Q(dx) < \infty$. In case that $p=1$, which corresponds to the median, in fact no integrability assumption is needed.
\end{remark}

\vspace{0.3cm}
\begin{remark} \label{pM}
In Ferger \cite{Ferger3} we derive several analytical properties of the criterion function $M$ with emphasis on the existence, characterisation and
possible uniqueness of its minimizers, see section 3 below for a summary.
\end{remark}

\vspace{0.3cm}
Usually $Q$ is unknown, but it can be approximated by the empirical measure $Q_n$ pertaining to a sample $X_1,\ldots,X_n, n \in \mathbb{N},$ of  $S-$valued random
variables defined on a common probability space $(\Omega,\mathcal{A},\mathbb{P})$, that is $Q_n := n^{-1} \sum_{i=1}^n \delta_{X_i}$, where
$\delta_x$ denotes the Dirac-measure at point $x \in S$. Replacing $Q$ in the definition (\ref{M}) of $M$ by $Q_n$ leads to
the empirical counterpart $M_n$ given by
\begin{equation} \label{Mn}
 M_n(t) = \frac{1}{n} \sum_{i=1}^n h(X_i,t)-h(X_i,t_0), \; t \in \mathbb{R}.
\end{equation}
Let $X_1,\ldots,X_n$ be independent with common distribution $Q$ and assume (only for that moment) that (\ref{sex}) holds. Then by the Strong Law of Large Numbers $M_n(t) \rightarrow M(t) \; \mathbb{P}-$almost surely (a.s.) for each $t \in \mathbb{R}$. Let
\begin{equation*}
\hat{m}_n \text{ be the \textbf{smallest minimizing point} of } M_n.
\end{equation*}
Of course $\hat{m}_n$ is also the (smallest) minimiser of $t \mapsto 1/n \sum_{1 \le i \le n} h(X_i,t)$. By Corollary 6 of Ferger \cite{Ferger0} the estimator $\hat{m}_n$ for $m$ is
$\mathcal{A}$-Borel measurable. If $M$ is coercive and $m$ is unique, then $\hat{m}_n$ converges to $m$ a.s. by Theorem 9 of Ferger \cite{Ferger0}. Now it is well known that almost sure convergence is equivalent to
$\lim_{n \rightarrow \infty}\mathbb{P}(\sup_{k \ge n}|\hat m_k -m|>x)=0$ for all $x>0$. We will improve this result significantly by showing that here
the tail probabilities of the suprema converge exponentially fast or polynomially fast, depending on whether a boundedness condition or a moment condition is satisfied.\\

The random variable $\hat{m}_n$ is called \emph{convex M-estimator}, because the pertaining criterion function $M_n$ is convex. A first systematic study of such convex M-estimators based on $M_n$ of general type (\ref{Mn}) was done by Habermann (1989) \cite{Habermann} and Niemiro (1992) \cite{Niemiro}. Habermann \cite{Habermann} focuses on consistency and $\sqrt{n}-$ asymptotic normality, whereas Niemiro \cite{Niemiro} derives an asymptotic linearization of $\sqrt{n}(\hat{m}_n-m)$  with almost sure rates of convergence for the remainder term. Moreover, he extends the consistency results by giving rates of convergence for the tail-probabilities, confer (\ref{Niemiro}) and Proposition \ref{Thm2} below for details. It should be noted that both authors more generally consider $M_n(t)$ with
parameter $t$ running through the euclidian space $\mathbb{R}^d$ with $d \ge 1$ and not only $d=1$ as we do. An alternative approach to the asymptotic linearization is given by Hjort and Pollard (1993) \cite{Hjort}. Here, the key idea is to represent $\sqrt{n}(\hat{m}_n-m)$ as a minimizer of the convex process $Z_n$, which arises from $M_n$ by  $Z_n(t):= n \{M_n(m+n^{-1/2}t)-M_n(m)\}, t \in \mathbb{R}^d$. Under some conditions on $h$ and $M$ one has that $Z_n$ allows for a decomposition into a linear part plus a quadratic form and a negligible additive term. Hjort and Pollard  prove an argmin-theorem for such convex stochastic processes and herewith obtain asymptotic normality, confer also Liese and Mieschke (2008) \cite{Liese}.
Davis, Knight and Liu (1992) give an extension to general convex processes $Z_n$. In short their argmin-theorem says that if the fidis of the $Z_n$ converge to $Z$
and if $Z$ has a.s. an unique minimizer, $\xi$ say, then the argmins of $Z_n$, $\xi_n$ say, converge in distribution to $\xi$. In a further step of generalization Geyer (1996) \cite{Geyer} considers $Z_n$ and $Z$ with values in the extended real line $\overline{\mathbb{R}}=[-\infty,\infty]$. Geyer's result brings forth non-normal limits and normalizing sequences different from $\sqrt{n}$. Here the uniqueness of $\xi$, the argmin of $Z$, is essential, because
$\xi$ acts as limit variable. If uniqueness is not given, Ferger (2021) \cite{Ferger3} shows that the $\xi_n$ converge in distribution to the \textbf{set} of all minimizer $Z$. To explain this notion of ''distributional convergence to a set'' first recall that classical distributional convergence
means that the distributions converge weakly to a probability measure, namely the distribution of the limit variable. In case of convergence to a set
the limit is no longer a probability measure, but a \emph{Choquet-capacity}, which in turn uniquely determines a \emph{random closed set}. (For the notions of \emph{Choquet-capacity} and \emph{random closed set} we refer to the monograph of Molchanov \cite{Molchanov}.) The limit of the $\xi_n$ is the capacity functional of the random closed set of all minimizers of $Z$. For unique $\xi$ this capacity functional coincides with the distribution of $\xi$ and we are back to the usual convergence in distribution.\\

In contrast to the above asymptotics (mainly for the distributions) the novelty in this work is that we give upper bounds for the tail probabilities $\mathbb{P}(\sup_{k \ge n}|\hat m_k -m|>x), x>0,$ that hold for every finite sample size $n \in \mathbb{N}$. From these inequalities, we derive $r$-complete convergence (including rates) and
$r$-quick convergence.  This is completely new in convex M-estimation.
The concept of $r-$complete convergence was for $r=1$ as first introduced by Hsu and Robbins (1947) \cite{Hsu} and later on generalized to $r>0$ by
Tartakovsy (1998) \cite{Tarta1}, confer also Tartakovsky (2023) \cite{Tarta2}. Whereas the mode of $r$-quick convergence goes back to Strassen (1967) \cite{Strassen}.\\

The paper is organized as follows: In section 2 we derive exponential inequalities for the suprema
$\sup_{k \ge n}|\hat m_k -m|$.
Herewith a result of Niemiro \cite{Niemiro} can be improved substantially (at least for dimension $d=1$). Section 3 is devoted to the analysis of the
criterion function $M$. The important special case of quantile-estimators is treated in section 4 with an improvement of Serfling's \cite{Serfling} inequality. Section 5 contains the above mentioned results on $r$-complete and $r$-quick convergence, but also many examples. One of these examples
gives an extension of Hoeffdings's inequality. Finally, in section 6 we derive the corresponding propositions when instead of boundedness only a moment-conditions is fulfilled.

\section{Exponential tail-bounds}
In order to derive exponential inequalities for the tail-probabilities of the suprema we need that the following boundedness-assumption is fulfilled:
For every $x \neq 0$ there exist reals $a(x)<b(x)$ such that
\begin{equation} \label{b}
 a(x)\le D^+h(X_i,m+x)\le b(x) \text{ a.s. for all }  1\le i \le n.
\end{equation}

\vspace{0.3cm}
Let $D^+f$ and $D^-f$ denote the right-and left derivative of a function $f:\mathbb{R} \rightarrow \mathbb{R}$.
It is well known that $M$ as a convex function possesses the right and left derivatives, which in addition are non-decreasing. Moreover, $m \in \mathbb{R}$ is a minimizer of $M$ if and only if
\begin{equation} \label{m}
 D^-M(m) \le 0 \le D^+M(m).
\end{equation}
Confer Corollary 7.2 of Ferger \cite{Ferger3}, where we give a short proof.

Our first main result is as follows:\\

\begin{theorem} \label{expb} Suppose $m \in \mathbb{R}$ satisfies (\ref{m}). Further assume that $X_1,\ldots,X_n, n \in \mathbb{N}$, are i.i.d. with distribution $Q$ such that (\ref{b}) is fulfilled. Then
the following exponential inequalities hold for every $n \in \mathbb{N}$:
\begin{align}
&\mathbb{P}(\sup_{k \ge n} (\hat{m}_k-m) >x) \le \exp\{-2(b(x)-a(x))^{-2} n D^+M(m+x)^2\} \quad \forall \; x>0. \label{i1}\\
&\mathbb{P}(\inf_{k \ge n} (\hat{m}_k-m) < -x) \le \exp\{-2(b(-x)-a(-x))^{-2} n D^+M(m-x)^2\} \quad \forall \; x>0.\label{i2}\\
&\mathbb{P}(\sup_{k \ge n} |\hat{m}_k-m| >x) \le 2 \exp\{-2 A(x)^{-2} n d(x)^2\} \quad \forall \; x>0, \label{i3}
\end{align}
where $$A(x):=\max\{b(x)-a(x),b(-x)-a(-x)\}$$ and  $$d(x):=\min\{D^+M(m+x), -D^+(m-x)\} \ge 0.$$ If $m$ is the unique minimizing point of $M$, then actually $d(x)$ is positive for
every $x>0$ and in (\ref{i1})-(\ref{i3}) we obtain in each case an exponentially fast rate of convergence.
\end{theorem}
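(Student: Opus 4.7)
The plan is to reduce the theorem to a Hoeffding-type maximal inequality for sample means of bounded i.i.d.\ variables over the infinite tail $\{k\ge n\}$, and then instantiate it with the samples $D^+h(X_i,m\pm x)$. First I would exploit the convex structure: since $\hat m_k$ is the \emph{smallest} minimizer of the convex function $M_k$, monotonicity and right-continuity of $D^+M_k$ yield the key equivalence
$$\{\hat m_k > t\}=\{D^+M_k(t)<0\},\qquad \{\hat m_k\le t\}=\{D^+M_k(t)\ge 0\}.$$
Thus $\{\sup_{k\ge n}(\hat m_k-m)>x\}=\{\exists\, k\ge n:D^+M_k(m+x)<0\}$, and analogously $\{\inf_{k\ge n}(\hat m_k-m)<-x\}\subseteq\{\exists\, k\ge n:D^+M_k(m-x)\ge 0\}$. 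Interchanging the right-derivative with the finite sum gives $D^+M_k(t)=k^{-1}\sum_{i=1}^k D^+h(X_i,t)$, so these events are described by the sample means $\bar Y_k$ of the i.i.d.\ variables $Y_i:=D^+h(X_i,m+x)$, which by (\ref{b}) lie in $[a(x),b(x)]$ and have common mean $\mathbb{E}Y_1=D^+M(m+x)\ge 0$; symmetrically for $Z_i:=D^+h(X_i,m-x)$ with mean $D^+M(m-x)\le 0$.

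The central step is the infinite-horizon reverse-Hoeffding inequality: if $Y_i\in[a,b]$ are i.i.d.\ with mean $\mu$, then for every $\varepsilon>0$,
$$\mathbb{P}\bigl(\sup_{k\ge n}(\mu-\bar Y_k)>\varepsilon\bigr)\le \exp\bigl(-2n\varepsilon^2/(b-a)^2\bigr).$$
To prove it I would use that $(\bar Y_k)_{k\ge 1}$ is a reverse martingale with respect to the decreasing exchangeable $\sigma$-fields $\mathcal G_k=\sigma(\bar Y_k,Y_{k+1},\ldots)$, so $e^{t(\mu-\bar Y_k)}$ is a reverse submartingale by Jensen. Reversing indices on $\{n,\ldots,N\}$ turns it into a forward submartingale whose \emph{terminal} term is $e^{t(\mu-\bar Y_n)}$, and Doob's maximal inequality bounds the truncated supremum by $e^{-t\varepsilon}\mathbb{E}e^{t(\mu-\bar Y_n)}\le e^{-t\varepsilon+t^2(b-a)^2/(8n)}$ via the usual Hoeffding mgf estimate. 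Optimizing in $t>0$ and letting $N\to\infty$ yields the stated bound.

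Applying this inequality with $(a,b)=(a(x),b(x))$ and $\varepsilon=D^+M(m+x)$ produces (\ref{i1}); the analogous version for the $Z_i$ with $\varepsilon=-D^+M(m-x)$ gives (\ref{i2}); and (\ref{i3}) follows from the union bound together with the trivial estimates $b(\pm x)-a(\pm x)\le A(x)$ and $\min\{D^+M(m+x),-D^+M(m-x)\}\ge d(x)$. For the final assertion, assume $m$ is unique and fix $x>0$. If $D^+M(m+x)$ were $0$, then monotonicity would give $D^-M(m+x)\le 0$, so by (\ref{m}) the point $m+x$ would be a second minimizer of $M$, contradicting uniqueness; symmetrically $D^+M(m-x)<0$. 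Hence $d(x)>0$ and the exponents in (\ref{i1})--(\ref{i3}) are strictly negative, yielding genuine exponential decay in $n$. The main obstacle I anticipate is the rigorous verification of the reverse-Hoeffding step—the reverse-martingale identification, the Doob reversal, and the passage $N\to\infty$—while everything else reduces to standard convex-function calculus and Hoeffding mgf bookkeeping.
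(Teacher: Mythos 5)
Your proposal is correct and follows essentially the same route as the paper: the argmin--derivative equivalence $\{\hat m_k>t\}=\{D^+M_k(t)<0\}$, the reverse-martingale/time-reversal/Doob maximal inequality applied to $e^{hT}$ combined with Hoeffding's mgf lemma and optimization in $h$, and the union bound for (\ref{i3}). The only (harmless) deviation is your positivity argument for $d(x)$: you note that $D^+M(m\pm x)=0$ together with $D^-M(m\pm x)\le D^+M(m\pm x)$ would make $m\pm x$ itself a second minimizer, which is slightly more direct than the paper's argument via density of the points where $D^+M=D^-M$, and equally valid.
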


\begin{proof} For the derivation of (\ref{i1}) first observe that
$$
\{\sup_{k \ge n} (\hat{m}_k-m) >x\}= \bigcup_{k \ge n}\{\hat{m_k}-m>x\}=\bigcup_{l \ge n}\bigcup_{n \le k \le l}\{\hat{m_k}-m>x\},
$$
whence by $\sigma$-continuity from below
\begin{align} \label{obs1}
\mathbb{P}(\sup_{k \ge n} (\hat m_k-m)>x)=\lim_{l \rightarrow \infty}\mathbb{P}(\bigcup_{n \le k \le l}\{\hat m_k-m>x\}).
\end{align}

By Theorem 1 of Ferger \cite{Ferger0} we have the following basic equality: $\{\hat{m}_k>m+x\}=\{D^+M_k(m+x) <0\}$. Herewith it follows that
\begin{eqnarray}
\{\hat{m}_k-m>x\}&=&\{D^+M_k(m+x)<0\} \nonumber\\
&=&\{\frac{1}{k}\sum_{i=1}^k D^+h(X_i,m+x)<0\} \nonumber\\
&=&\{\frac{1}{k}\sum_{i=1}^k \big(D^+h(X_i,m+x)-\mathbb{E}[D^+h(X_i,m+x)]\big)<-\mathbb{E}[D^+h(X_i,m+x)]\}\nonumber\\
&=&\{\frac{1}{k}\sum_{i=1}^k -\big(D^+h(X_i,m+x)-\mathbb{E}[D^+h(X_i,m+x)]\big)>\mathbb{E}[D^+h(X_i,m+x)]\} \nonumber\\
&=&\{\frac{1}{k}\sum_{i=1}^k -\big(D^+h(X_i,m+x)-\mathbb{E}[D^+h(X_i,m+x)]\big)>D^+M(m+x)\}, \label{rep}
\end{eqnarray}
where the last equality holds, because $\mathbb{E}[D^+h(X_i,t)]=D^+M(t)$ for all $t \in \mathbb{R}$ according to Proposition 2.4 of Ferger \cite{Ferger3} upon noticing that the $D^+h(X_i,t)$ are integrable by (\ref{b}). Put $\xi_i:=\xi_i(x,m):=-\big(D^+h(X_i,m+x)-\mathbb{E}[D^+h(X_i,m+x)]\big), i \in \mathbb{N}$. Then $(\xi_i)_{i \in \mathbb{N}}$ is a sequence
of i.i.d. integrable real random variable. If
$$
 T_k := \frac{1}{k} \sum_{i=1}^k \xi_i \quad \text{and} \quad \mathcal{G}_k :=\sigma(T_j:j \ge k), \; k \in \mathbb{N},
$$
then by Example 6.5.5 (c) in G\"{a}nssler and Stute \cite{Stute} the sequence $(T_k,\mathcal{G}_k)_{k \in \mathbb{N}}$ is a reverse martingale, confer also Example 3 on p. 241 in Chow and Teicher \cite{Chow}.
Infer from the monotonicity of $D^+M$ and (\ref{m}) that $\eta:=D^+M(m+x) \ge D^+M(m) \ge 0$ is non-negative. Thus with (\ref{rep}) we obtain for every $l \ge n$:
\begin{eqnarray}
 \bigcup_{n \le k \le l} \{\hat{m}_k-m >x\}&=&\bigcup_{n \le k \le l}\{T_k > \eta\}=\bigcup_{1 \le j \le l-n+1}\{T_{l-j+1}> \eta\} \label{mart}\\
 &=&\bigcup_{1 \le j \le l-n+1}\{ e^{h T_{l-j+1}}> e^{h \eta}\} \nonumber\\
 &=&\{\max_{1 \le j \le l-n+1} e^{h T_{l-j+1}}> e^{h \eta}\} \quad \forall \; h>0. \label{Chernov}
\end{eqnarray}
The reversing of time in (\ref{mart}) leads to the martingale $(T_{l-j+1},\mathcal{G}_{l-j+1})_{1 \le j \le l-n+1}$. Since $x \mapsto e^{h x}, x \in \mathbb{R}$, is convex for each $h>0$, it follows that $(e^{h T_{l-j+1}},\mathcal{G}_{l-j+1})_{1 \le j \le l-n+1}$ is a non-negative sub-martingale. Hence
Doob's maximal-inequality yields that
$$
 \mathbb{P}\big(\max_{1 \le j \le l-n+1} e^{h T_{l-j+1}}> e^{h \eta}\big) \le e^{-h \eta} \mathbb{E}[e^{h T_n}] \quad \forall \; h>0.
$$
Notice that the upper bound here does not depend on $l \ge n$. Therefore by (\ref{obs1}) and (\ref{Chernov}) we arrive at
\begin{equation} \label{ub1}
 \mathbb{P}(\sup_{k \ge n} (\hat{m_k}-m)>x) \le e^{-h \eta} \mathbb{E}[e^{h T_n}] \quad \forall \; h>0.
\end{equation}
If $S_n:=\sum_{i=1}^n \xi_i$, then $\mathbb{E}[e^{h T_n}] = \mathbb{E}[e^{u S_n}]$, where $u:=h/n >0$. By definition
$\xi_i= \rho_i-\mathbb{E}[\rho_i]$, where $\rho_i = -D^+h(X_i,m+x), i \in \mathbb{N}$, whence $-b(x) \le \rho_i \le -a(x)$ a.s. for all $1 \le i \le n$ by the boundedness assumption (\ref{b}). Now it follows from Hoeffding's lemma, confer, e.g., Lemma 2.6 in Massart (2007) \cite{Massart} that
$$
 \mathbb{E}[e^{u S_n}] \le e^{\frac{1}{8} n u^2 (a(x)-b(x))^2} \quad \forall \; u>0.
$$
Consequently we obtain with (\ref{ub1}) that
\begin{equation} \label{ub2}
 \mathbb{P}(\sup_{k \ge n} (\hat{m}_k-m)>x) \le e^{-h \eta + \frac{1}{8}\frac{h^2}{n} (b(x)-a(x))^2} \quad \forall \; h>0.
\end{equation}
An elementary minimization of the upper bound with respect to $h$ results in $h_0=\frac{4n \eta}{(b(x)-a(x))^2}$ as
minimizer, which is positive, if $\eta$ is positive. Substituting $h_0$ into the upper bound on the right-hand side of (\ref{ub2}) gives
$$
 \mathbb{P}(\sup_{k \ge n} (\hat{m_k}-m)>x) \le e^{-\frac{2}{(b(x)-a(x))^2} n \eta^2},
$$
which is inequality (\ref{i1}). In case that $\eta=0$ taking the limit $h \downarrow 0$ in (\ref{ub2}) yields $1$ as (trivial) upper bound in accordance with (\ref{i1}). As to the derivation of inequality (\ref{i2}) one analogously starts with
\begin{equation} \label{inf1}
\mathbb{P}(\inf_{k \ge n} (\hat{m_k}-m)<-x)=\lim_{l \rightarrow \infty}\mathbb{P}(\bigcup_{n \le k \le l}\{\hat{m_k}-m<-x\}).
\end{equation}
Now it is true that
\begin{eqnarray}
& &\{\hat{m}_k-m <-x\} \subseteq \{\hat{m}_k-m \le -x\} = \{\hat{m}_k \le m-x\} \nonumber\\
&=& \{D^+M_k(m-x) \ge 0\} \label{sigma}\\
&=&  \{\frac{1}{k}\sum_{i=1}^k D^+h(X_i,m-x)-\mathbb{E}[D^+h(X_i,m-x)]\big)\ge -D^+M(m-x)\}, \label{inf2}
\end{eqnarray}
where the equality in (\ref{sigma}) holds by Theorem 1 of Ferger \cite{Ferger0}.
Put $\eta:=-D^+M(m-x)$. Since $m-x<m$, it follows from Theorem 24.1 in Rockafellar that $D^+M(m-x) \le D^-M(m)$. But $D^-M(m)\le 0$ by (\ref{m}), whence $\eta \ge 0$. From here on, the rest of the proof of (\ref{i2}) is completely analogous to the one above.

Finally, the third inequality (\ref{i3}) follows from what we have shown so far, because
\begin{equation} \label{supinf}
 \{\sup_{k \ge n}|\hat{m}_k-m|>x\} = \{\sup_{k \ge n}(\hat{m}_k-m)>x\} \cup \{\inf_{k \ge n}(\hat{m_k}-m)<-x\}.
\end{equation}
It remains to prove the last part of the theorem. For that purpose assume that there exists some positive $x$ such that
$d(x)=0$, so that $D^+M(m+x)=0$ or $D^+M(m-x)=0$. In the first case it follows by monotonicity that $D^+M(z)=0$ for all $z \in [m,m+x]$.
Since the set of points at which $D^+M$ and $D^-M$ coincide lies dense in $\mathbb{R}$, we find a point $z \in (m,m+x)$ with
$D^-M(z)=D^+M(z)=0$. Consequently $z$ is a further minimizer of $M$  by (\ref{m}), in contradiction to the uniqueness of $m$. In the second case
notice that $0=D^+M(m-x)\le D^-M(m-x/2) \le D^-M(m)\le 0$, where the first inequality holds by Theorem 24.1 of Rockafellar \cite{Rockafellar}.
Thus $D^-M(z)=0$ for all $z \in [m-x/2,m]$ and one can argue as in the first case to arrive at a contradiction to the uniqueness of $m$. Herewith the proof is complete.
\end{proof}

Assume that $m$ is unique. Niemiro in Theorem 3 \cite{Niemiro} shows  that for every $x>0$ there exist constants $\alpha(x)>0$ and $K(x)$ such that
\begin{equation} \label{Niemiro}
\mathbb{P}(|\hat{m}_n-m|>x)  \le K(x) \exp\{-\alpha(x) n\} \quad \forall \; x>0 \quad \forall \; n \in \mathbb{N}.
\end{equation}
Comparing this with inequality (\ref{i3}) one sees that
the constants can be identified: $K(x)=2$ and $\alpha(x)= 2 \{d(x)/A(x)\}^2 >0$. But actually more remarkable is that our inequality even applies to the larger tail-probabilities  for the suprema, $\mathbb{P}(\sup_{k \ge n}|\hat{m}_k-m|>x)\ge \mathbb{P}(|\hat{m}_n-m|>x)$, namely:
\begin{equation} \label{supN}
 \mathbb{P}(\sup_{k \ge n}|\hat{m}_k-m|>x) \le 2 \exp\{-\alpha(x) n\} \quad \forall \; x>0 \quad \forall \; n \in \mathbb{N}.
\end{equation}
This is a significant improvement of Niemiro's result (\ref{Niemiro}). On the other hand Niemiro in comparison to our boundedness-assumption (\ref{b}) merely requires the existence of a moment generating
function. More precisely, he requires there exists some $x_0>0$ such that for every $x \in [-x_0,x_0]$ there exists some $\theta_0=\theta_0(x)>0$ so that
\begin{equation} \label{moment}
 \mathbb{E}[ \exp\{\theta D^+M(X_1,m+x)\}] < \infty \quad \forall \; \theta \in (0,\theta_0).
\end{equation}
We will see that a result of type (\ref{supN}) still holds under the weaker requirement (\ref{moment}).

To do this, note that that under (\ref{moment}) all moments of the random variables $D^+M(X_1,m+x), |x| \le x_0$, exist and are finite, confer, e.g., Billingsley \cite{Billingsley}, p.285.
In particular, the $\xi_i=\xi_i(x)$ in the above proof of Theorem \ref{expb} are integrable. Therefore we may apply (\ref{ub1}) with $h=n \theta$ and $0<\theta<\theta_0$. It gives:

\begin{equation}
 \mathbb{P}(\sup_{k \ge n} (\hat{m_k}-m)>x) \le e^{-n \theta \eta} \mathbb{E}[e^{\theta S_n}]= e^{-n \theta \eta} \varphi_x(\theta)^n,
\end{equation}
where $\varphi_x(\theta)=\mathbb{E}[\exp\{\theta \xi_1(x)\}]$ is the common moment generating function of the $\xi_i(x)$. Letting $\kappa_x(\theta):=\log \varphi_x(\theta)$ we arrive at
$$
 \mathbb{P}(\sup_{k \ge n} (\hat{m_k}-m)>x) \le \exp\{-n(\eta \theta-\kappa_x(\theta)\} \quad \forall \; x \in (0,x_0].
$$
Recall from Theorem \ref{expb} that $\eta=\eta_x=D^+M(m+x)$ is positive as long as $x$ is positive. Thus Lemma 2.7.2 of Durrett \cite{Durrett} yields that
the factor $\eta \theta - \kappa_x(\theta) =:\beta_1(x)$ is positive provided $\theta$ is sufficiently small, say $\theta \le \theta_1(x)$ for some positive $\theta_1(x)$. In summary we get:
\begin{equation} \label{e1}
 \mathbb{P}(\sup_{k \ge n} (\hat{m_k}-m)>x) \le \exp\{-\beta_1(x) n\} \quad \forall \; x \in (0,x_0]  \quad \forall \; n \in \mathbb{N}.
\end{equation}
Analogously it follows that there exists some positive $\beta_2(x)$ such that
\begin{equation} \label{e2}
 \mathbb{P}(\inf_{k \ge n} (\hat{m_k}-m)<-x) \le \exp\{-\beta_2(x) n\} \quad \forall \; x \in (0,x_0]  \quad \forall \; n \in \mathbb{N}.
\end{equation}
Combining (\ref{supinf}) with the above inequalities (\ref{e1}) and (\ref{e2}) finally results in
\begin{equation} \label{e3}
 \mathbb{P}(\sup_{k \ge n} |\hat{m_k}-m|>x) \le 2 \exp\{-\beta(x) n\} \quad \forall \; x \in (0,x_0]  \quad \forall \; n \in \mathbb{N},
\end{equation}
where $\beta(x)= \min\{\beta_1(x),\beta_2(x)$ is positive. If we replace $\beta(x)$ by $\beta(x\wedge x_0)$, then the inequality holds actually for all $x>0$. Thus we have indeed shown the counterpart of inequality (\ref{supN}), but under the assumption (\ref{moment}), which is weaker than the boundedness-assumption (\ref{b}) used in the derivation of (\ref{supN}).

\section{Existence and uniqueness of $m$}
In this section, we summarise results from Ferger \cite{Ferger3} about analytical properties of M and its \emph{minimum set} $A(M)$, in terms of its existence, characterisation and uniqueness.
Here, $A(f):=\{t \in \mathbb{R}: f(t)=\inf_{s \in \mathbb{R}} f(s)\}$ is the set of all minimizing arguments of an arbitrary function $f:\mathbb{R} \rightarrow \mathbb{R}$.\\

If condition (\ref{ex}) holds, then for each fixed real number $t_0$ the pertaining criterion function $M$ in (\ref{M})
exists and is real-valued for each $t \in \mathbb{R}$. Moreover $M:\mathbb{R} \rightarrow \mathbb{R}$ is convex and
\begin{equation} \label{DM}
D^\pm M(t)= \int_S D^\pm h(x,t) Q(dx) \text{ for every } t \in \mathbb{R}.
\end{equation}
For each distribution $Q$ the minimum set is given by
\begin{eqnarray}
 A(M)&=&\{m \in \mathbb{R}:D^-M(m) \le 0 \le D^+M(m)\} \label{AM}\\
 &=&\{m \in \mathbb{R}: \int_S D^- h(x,t) Q(dx) \le 0 \le \int_S D^+ h(x,t) Q(dx)\}.
\end{eqnarray}
In particular, $Q_n$ instead of $Q$ yields $A(M_n)$.
In the further course we consider the rich subclass of estimators induced by bivariate functions of the type $h:\mathbb{R}\times \mathbb{R} \rightarrow \mathbb{R}$ given through
\begin{equation} \label{h}
h(x,t)=v(t-x) \text{ where } v:\mathbb{R} \rightarrow \mathbb{R} \text{ is convex}.
\end{equation}
Furthermore, for the sake of simplicity, we consider only $t_0=0$.
So let $$M(t)=\int_\mathbb{R} v(t-x)-v(-x) Q(dx)$$ with general probability measure $Q$. Notice that (\ref{DM}) reduces to
\begin{equation} \label{Dv}
D^\pm M(t)= \int_S D^\pm v(t-x) Q(dx) \text{ for every } t \in \mathbb{R}.
\end{equation}
If $v$ is not only convex but coercive with $D^-v(0)\le 0 \le D^+v(0)$, (which means that $0$ is a minimizer of $v$), then $M$ is coercive and Argmin$(M)$ is a non-empty compact interval. In particular, $m \in$ Argmin$(M)$ exists. For $Q$ symmetric at $m \in \mathbb{R}$, i.e., $X_1-m$ and $m-X_1$ are equal in distribution, one has that $m \in A(M) \neq \emptyset$ provided $v$ is convex and even.
In both cases, if in addition $v$ is differentiable on $\mathbb{R}\setminus \{0\}$ and $F$ is continuous on Argmin$(M)$, then Argmin$(M)$ is non-empty and given by
\begin{eqnarray}
\text{Argmin}(M) &=& \{m \in \mathbb{R}: \int_\mathbb{R} D^- v(m-x) Q(dx)=0= \int_\mathbb{R} D^+ v(m-x) Q(dx)\} \label{sol0}\\
                                &=& \{m \in \mathbb{R}: \int_{\mathbb{R} \setminus \{m\}} v^\prime (m-x) Q(dx)=0\}. \label{sol}
\end{eqnarray}
Finally, Argmin$(M)=\{m\}$ is a singleton in case that $v$ is strictly convex.
For $v$ differentiable on the entire real line the continuity assumption on $F$ can be dropped and the integral in (\ref{sol}) is over the entire space $\mathbb{R}$ rather than over $\mathbb{R} \setminus \{m\}$.

In summary, it can be said that under strict convexity (and the above requirements), a unique minimizing point exists and this results as the solution of the integral equation $\int_\mathbb{R} v^\prime (m-x) Q(dx)=0$ or
$\int_{\mathbb{R} \setminus \{m\}} v^\prime (m-x) Q(dx)=0$, according as $v$ is differentiable at zero or not.

\section{Quantile estimators}

Before we come to the conclusions of our theorem, we would first like to illustrate it with an important example.
For a given $\alpha \in (0,1)$ define $v(u):=v_\alpha(u):=u(1_{\{u \ge 0\}}-\alpha)$.
The one-sided derivatives are given by $D^+v(u)=1_{\{u \ge 0\}}-\alpha$ and $D^-v(u)=1_{\{u > 0\}}-\alpha$, so that by (\ref{Dv}) one obtains $D^+M(t)=F(t)-\alpha$ and $D^-M(t)=F(t-)-\alpha$, where $F$ is the distribution function of $Q$ and $F(t-)$ is the left-limit of $F$ at point $t \in \mathbb{R}$. It follows from (\ref{AM}) that $A(M)$ is equal to the set of all $\alpha$-quantiles $q_\alpha$ of $F$. Similarly, if
$F_n$ is the empirical distribution function of the sample $X_1,\ldots,X_n$, then $\hat m_n=F^{-1}_n(\alpha)$ with $F^{-1}$ denoting the quantile function (general inverse) of any distribution function $F$. The boundedness assumption is fulfilled with $b(x)=1-\alpha$ and $a(x)=-\alpha$, so that
$b(x)-a(x)=1$. Moreover $d(x)=\min\{F(q_\alpha+x)-\alpha,\alpha-F(q_\alpha-x)\}$. If $q_\alpha$ is unique we obtain from Theorem \ref{expb} that $d(x)$ is positive and
\begin{equation} \label{q}
 \mathbb{P}(\sup_{k \ge n} |F^{-1}_k(\alpha)-F^{-1}(\alpha)| >x) \le 2 e^{-2 n d(x)^2} \quad \forall \; x>0.
\end{equation}
Serfling \cite{Serfling} shows in Theorem 2.3.2 that
\begin{equation} \label{S}
 \mathbb{P}(|F^{-1}_n(\alpha)-F^{-1}(\alpha)| >x) \le 2 e^{-2 n d(x)^2} \quad \forall \; x>0.
\end{equation}
In view of $\sup_{k \ge n} |F^{-1}_k(\alpha)-F^{-1}(\alpha)| \ge |F^{-1}_n(\alpha)-F^{-1}(\alpha)|$ our inequality (\ref{q}) is significantly sharper
than Serfling's inequality (\ref{S}) with the same bound. In Corollary 2.3.2 Serfling \cite{Serfling} gives the following supremal-inequality:
\begin{equation}
\mathbb{P}(\sup_{k \ge n} |F^{-1}_k(\alpha)-F^{-1}(\alpha)| >x) \le \frac{2}{1-\rho(x)} e^{-2 n d(x)^2} \quad \forall \; x>0,
\end{equation}
where $\rho(x)=e^{-2 d(x)^2}$. We see that not only $2 < \frac{2}{1-\rho(x)}$ for all $x>0$, but also that $\frac{2}{1-\rho(x)} \uparrow \infty$ as $x\downarrow 0$. The reason for this lies in that Serfling argues as follows:
\begin{eqnarray}
\mathbb{P}(\sup_{k \ge n} |F^{-1}_k(\alpha)-F^{-1}(\alpha)| >x)&=&\mathbb{P}(\bigcup_{k \ge n} \{|F^{-1}_k(\alpha)-F^{-1}(\alpha)|>x\})\nonumber \\
 &\le& \sum_{k \ge n} \mathbb{P}(|F^{-1}_k(\alpha)-F^{-1}(\alpha)| >x) \label{subadd}\\
&\le& 2 \sum_{k \ge n} (e^{-2 d(x)^2})^k \quad \text{by } (\ref{S}) \nonumber
\end{eqnarray}
and the geometric sum does the rest. Obviously the upper bound in (\ref{subadd}) based on the $\sigma$-subadditivity of $\mathbb{P}$ is far from doing the same thing as Doob's maximal-inequality for martingale does in our proof.

\section{Conclusions in the general framework}
The boundedness requirement (\ref{b}) is intended to apply to the entire section, without further constant mention of it.

We can conclude from Theorem \ref{expb} that the suprema $\sup_{k \ge n} |\hat{m}_k-m|, n \in \mathbb{N},$ of the distances
converge completely to $0$, provided $m$ is unique, that is
\begin{equation} \label{cc}
 \sum_{n=1}^\infty \mathbb{P}(\sup_{k \ge n} |\hat{m}_k-m|>x) < \infty \quad \text{for all } x>0.
\end{equation}
This implies that $\sup_{k \ge n} |\hat{m}_k-m| \rightarrow 0$ a.s. from which in turn we  can conclude the a.s. convergence of $\hat m_n$ to $m$.

In fact one can upgrade the complete convergence (\ref{cc}) substantially under the assumption that there exist positive constants $c$ and $\delta$
such that

\begin{equation} \label{DpMlb}
 |D^+M(m+x)| \ge c \; |x| \quad \forall \; x \in [-\delta,\delta].
\end{equation}

In Lemma \ref{pivot} below we give a very huge class of examples, where the above inequality (\ref{DpMlb}) is fulfilled.

\vspace{0.4cm}
\begin{corollary} \label{rcomplconv} Suppose that in (\ref{b}) the one-sided limits of the lower and upper bounds $a$ and $b$ at $0$ exist and are finite. If $D^+M$ meets the requirement (\ref{DpMlb}), then
$$
 n^\gamma \; \sup_{k \ge n}|\hat{m}_k-m|
$$
\emph{converges $r$-completely }to $0$ for all $r>0$, whenever $\gamma < 1/2$, that is by definition
\begin{equation}
 \sum_{n=1}^\infty n^{r-1} \mathbb{P}\big(n^\gamma \sup_{k \ge n}|\hat{m}_k-m| > \epsilon\big) <\infty \quad \forall \; \epsilon>0.
\end{equation}
We write
\begin{equation} \label{rcompl}
\sup_{k \ge n}|\hat{m}_k-m|=o(n^{-\gamma}) \; \mathbb{P} \text{-r completely}.
\end{equation}
\end{corollary}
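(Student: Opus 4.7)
The plan is to insert the scaled deviation $x = \varepsilon n^{-\gamma}$ into the supremal exponential inequality (\ref{i3}) of Theorem \ref{expb} and then show that the resulting series is summable. First I would fix $\varepsilon>0$ and $r>0$, and choose $N=N(\varepsilon,\gamma,\delta)$ so large that $\varepsilon n^{-\gamma}\le\delta$ for every $n\ge N$; the finitely many terms with $n<N$ contribute only a finite sum and can be ignored for convergence. For $n\ge N$, applying (\ref{i3}) gives
\begin{equation*}
  \mathbb{P}\bigl(\sup_{k\ge n}|\hat m_k-m|>\varepsilon n^{-\gamma}\bigr)
  \;\le\; 2\exp\!\bigl\{-2\,A(\varepsilon n^{-\gamma})^{-2}\,n\,d(\varepsilon n^{-\gamma})^{2}\bigr\}.
\end{equation*}

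The next step is to control $d$ and $A$ at the shrinking argument $\varepsilon n^{-\gamma}$. From (\ref{DpMlb}), combined with the facts that $D^+M(m+x)\ge 0$ for $x\ge 0$ and $D^+M(m-x)\le D^-M(m)\le 0$ for $x\ge 0$ (by Rockafellar's Theorem 24.1 and (\ref{m})), I would conclude that both arguments in $d$ are non-negative and each bounded below by $c\,\varepsilon n^{-\gamma}$, so $d(\varepsilon n^{-\gamma})\ge c\,\varepsilon n^{-\gamma}$. For $A$, the hypothesis that $a$ and $b$ admit finite one-sided limits at $0$ guarantees that $A$ is bounded on a punctured neighbourhood of $0$, hence there is a constant $A_0<\infty$ with $A(\varepsilon n^{-\gamma})\le A_0$ for all $n\ge N$ (after possibly enlarging $N$). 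Substituting these two bounds yields
\begin{equation*}
  \mathbb{P}\bigl(\sup_{k\ge n}|\hat m_k-m|>\varepsilon n^{-\gamma}\bigr)
  \;\le\; 2\exp\!\bigl\{-\kappa\,n^{1-2\gamma}\bigr\},
  \qquad \kappa := 2 c^{2}\varepsilon^{2}A_0^{-2}>0.
\end{equation*}

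Finally, since $\gamma<1/2$ gives $1-2\gamma>0$, the exponential $\exp\{-\kappa n^{1-2\gamma}\}$ decays faster than any negative power of $n$, so
\begin{equation*}
  \sum_{n=N}^{\infty} n^{r-1}\cdot 2\exp\!\bigl\{-\kappa\,n^{1-2\gamma}\bigr\} \;<\;\infty
\end{equation*}
for every $r>0$, by an elementary comparison (e.g.\ integral test). Adding the finite pre-$N$ part yields
$\sum_{n\ge 1} n^{r-1}\mathbb{P}(n^{\gamma}\sup_{k\ge n}|\hat m_k-m|>\varepsilon)<\infty$, which is the definition of $r$-complete convergence.

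The only delicate point is the uniform control of $A(\varepsilon n^{-\gamma})$ from above; this is exactly why the hypothesis on the finite one-sided limits of $a$ and $b$ at zero is imposed. All remaining steps are routine: the lower bound on $d$ follows immediately from the one-sided Lipschitz lower bound (\ref{DpMlb}), and the summability is a standard consequence of stretched-exponential decay dominating any polynomial weight $n^{r-1}$.
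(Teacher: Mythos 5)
Your proof is correct and follows essentially the same route as the paper: insert $x_n=\epsilon n^{-\gamma}$ into (\ref{i3}), bound $d(x_n)\ge c\,x_n$ via (\ref{DpMlb}) together with monotonicity of $D^+M$ and Rockafellar's Theorem 24.1, bound $A(x_n)$ from above using the finite one-sided limits of $a$ and $b$ at $0$, and sum the resulting stretched-exponential tail against $n^{r-1}$. The only point worth making explicit is the reduction to $\gamma>0$ (the case $\gamma\le 0$ follows from the case of a larger positive $\gamma$ since $n^{\gamma}\sup_{k\ge n}|\hat m_k-m|$ is monotone in $\gamma$ for $n\ge 1$), which your choice of $N$ with $\epsilon n^{-\gamma}\le\delta$ tacitly presupposes and which the paper disposes of with ``w.l.o.g.\ $\gamma>0$''.
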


\begin{proof} If $x \in (0,\delta]$, then $D^+M(m+x) \ge D^+M(m) \ge 0$, whence $D^+M(m+x)=|D^+M(m+x)| \ge c |x| = c x$.
If $x \in [-\delta,0)$, then $D^+M(m+x) \le D^-M(m) \le 0$, where the first inequality holds by Theorem 24.1 of Rockafellar \cite{Rockafellar}.
Consequently, $-D^+M(m+x)=|D^+M(m+x)| \ge c |x| = c (-x)$. Replacing $x$ by $-x$ yields that $-D^+M(m-x) \ge c x$ for all $x \in (0,\delta]$.
This shows that $d(x)=\min\{D^+M(m+x), -D^+(m-x)\}$ satisfies
\begin{equation} \label{dlb}
 d(x)\ge c \; x \quad \forall \; 0 < x \le \delta.
\end{equation}

Put $x_n:= x_n(\epsilon):= \epsilon n^{-\gamma}$. Then by Theorem \ref{expb}

\begin{equation} \label{ab}
 \mathbb{P}\big(n^\gamma \sup_{k \ge n}|\hat{m}_k-m| > \epsilon \big)= \mathbb{P}\big(\sup_{k \ge n}|\hat{m}_k-m| > x_n \big) \le 2 e^{-\frac{2}{A(x_n)^2} n d(x_n)^2} \quad \forall \; n \in \mathbb{N}.
\end{equation}
W.l.o.g. $\gamma>0$. Deduce from $x_n \downarrow 0$ that $A(x_n) \rightarrow \max\{b(0+)-a(0+),b(0-)-a(0-)\} =: A_0$, where w.l.o.g. $A_0$ is positive.
Indeed, in (\ref{b}) we may replace for instance the upper bound $b(x)$ by $b(x)+1$ resulting in $A_0 > 1$. Thus there exists an
integer $n_0$ such that $\frac{2}{A(x_n)^2} \ge \frac{1}{A_0^2}$ for all $n \ge n_0$. Moreover, $x_n \in (0,\delta]$ for all
$n \ge n_1$ with some $n_1 \in \mathbb{N}$, whence we can conclude from (\ref{dlb}) that
$\frac{2}{A(x_n)^2} n d(x_n)^2 \ge \frac{n c^2 x_n^2}{A_0^2}= L \epsilon^2 n ^{1-2\gamma}$ for every $n > n_2 =\max\{n_0,n_1\} \in \mathbb{N}$,
where $L=c^2/A_0^2$ is a positive constant. It follows with (\ref{ab}) that
$$
\sum_{n=1}^\infty n^{r-1} \mathbb{P}\big(n^\gamma \sup_{k \ge n}|\hat{m}_k-m| > \epsilon\big) \le \sum_{n=1}^{n_2} n^{r-1} + 2 \sum_{n > n_2} n^{r-1} e^{-L \epsilon^2 n^{1-2\gamma}} < \infty \quad \forall \; \epsilon >0
$$
and for all $r>0$, since $1-2 \gamma>0$.
\end{proof}

Since complete convergence $(r=1)$ implies almost sure convergence,
one can conclude from (\ref{rcompl}) that $\sup_{k \ge n}|\hat{m}_k-m|=o(n^{-\gamma})$ a.s. for all $\gamma<1/2$.
Our next result concerns the limit case $\gamma=1/2$, which involves an additional logarithmic factor. In fact the rate is $O(n^{-1/2} \sqrt{\log n})$. Actually, we can show a bit more:

\vspace{0.3cm}
\begin{corollary} \label{bigO} Under the assumptions of Corollary \ref{rcomplconv} it follows that there exists a deterministic real constant $K>0$ such that with probability one
$$
  \sup_{k \ge n}|\hat{m}_k-m| \le K n^{-1/2} \sqrt{\log n} \quad \text{ for eventually all } n \in \mathbb{N}.
$$
\end{corollary}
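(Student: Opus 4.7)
The plan is to combine the exponential supremum bound (\ref{i3}) with a direct Borel--Cantelli argument, reusing the near-zero asymptotics of $A(x)$ and $d(x)$ that were established inside the proof of Corollary \ref{rcomplconv}. Concretely, I would set $S_n:=\sup_{k\ge n}|\hat{m}_k-m|$ and $x_n:=K n^{-1/2}\sqrt{\log n}$ for a constant $K>0$ still to be chosen, and estimate $\mathbb{P}(S_n>x_n)$ for large $n$.

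Since $x_n\downarrow 0$, eventually $x_n\in(0,\delta]$, and then the chain of inequalities in the proof of Corollary \ref{rcomplconv} gives $d(x_n)\ge c\,x_n$, hence $d(x_n)^2\ge c^2 K^2\,n^{-1}\log n$. Likewise $A(x_n)\to A_0$, and after the harmless enlargement of $b(\cdot)$ by $1$ noted there we have $A_0>0$, so for all $n$ beyond some threshold $n_0$ it holds that $A(x_n)\le 2A_0$. Substituting these two bounds into (\ref{i3}) yields
\begin{equation*}
\mathbb{P}(S_n>x_n)\;\le\;2\exp\!\left\{-\frac{2\,n\,d(x_n)^2}{A(x_n)^2}\right\}\;\le\;2\exp\!\left\{-\frac{c^2K^2}{2A_0^2}\log n\right\}\;=\;2\,n^{-c^2K^2/(2A_0^2)}
\end{equation*}
for all sufficiently large $n$.

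I would then pick $K$ so large that the exponent $c^2K^2/(2A_0^2)$ strictly exceeds $1$ — any $K>\sqrt{2}\,A_0/c$ will do. With this choice $\sum_{n\ge 1}\mathbb{P}(S_n>x_n)<\infty$, and the first Borel--Cantelli lemma (no independence needed) delivers $\mathbb{P}(S_n>x_n \text{ infinitely often})=0$. Equivalently, with probability one $S_n\le K\,n^{-1/2}\sqrt{\log n}$ for all but finitely many $n$, which is exactly the assertion.

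I do not foresee any serious obstacle: the argument is essentially a calibrated reapplication of Theorem \ref{expb} with a slowly vanishing threshold $x_n$. The only care points are (i) controlling $A(x_n)$ by a fixed finite constant near $0$, which one inherits verbatim from the proof of Corollary \ref{rcomplconv}, and (ii) tuning $K$ so that the resulting polynomial tail $n^{-c^2K^2/(2A_0^2)}$ is summable, which is precisely the step where the extra $\sqrt{\log n}$ factor is needed to cross the $\gamma=1/2$ borderline that was just out of reach in Corollary \ref{rcomplconv}.
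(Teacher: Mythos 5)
Your proposal is correct and follows essentially the same route as the paper: the same threshold $x_n = K n^{-1/2}\sqrt{\log n}$, the same lower bound $d(x_n)\ge c\,x_n$ and control of $A(x_n)$ near $A_0$ inherited from Corollary \ref{rcomplconv}, substitution into (\ref{i3}) to get a polynomial tail $n^{-\mathrm{const}\cdot K^2}$, and the first Borel--Cantelli lemma after tuning $K$. The only (immaterial) difference is your slightly cruder bound $A(x_n)\le 2A_0$ versus the paper's $A(x_n)^2\le 2A_0^2$, which merely changes the admissible threshold for $K$.
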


\begin{proof} Put $x_n := K n^{-1/2} \sqrt{\log n}$. By Theorem \ref{expb}
\begin{equation} \label{ineq}
\mathbb{P}\big(\sup_{k \ge n}|\hat{m}_k-m| > x_n \big) \le 2 e^{-\frac{2}{A(x_n)^2} n d(x_n)^2} \quad \forall \; n \in \mathbb{N}.
\end{equation}
Since $x_n \downarrow 0$ it follows as in the above proof that there exists some integer $n_0$ such that
$\frac{2}{A(x_n)^2} n d(x_n)^2 \ge A_0^{-2} n c^2 x_n^2= A_0^{-2} c^2 K^2 \log n$ for all $n \ge n_0$. Thus with the inequality (\ref{ineq}) we
obtain
$$
 \sum_{n \ge n_0} \mathbb{P}\big(\sup_{k \ge n}|\hat{m}_k-m| > x_n \big) \le 2 \sum_{n \ge n_0} n^{-c^2 K^2/A_0^2} < \infty,
$$
if $K > A_0/c$. Now an application of the 1. Borel-Cantelli Lemma gives the desired result.
\end{proof}

In the further course, we deal with \emph{$r-$quick convergence}, which goes back to Strassen (1967) \cite{Strassen}.
For real random variables $Z$ and $Z_n, n \in \mathbb{N}$, and $\epsilon >0$ one considers the random set of
indices $\mathbb{I}_\epsilon :=\{k \in \mathbb{N}: |Z_k-Z| > \epsilon\}$. Define $L_\epsilon := \sup \mathbb{I}_\epsilon$, if
$\mathbb{I}_\epsilon$ is non-empty and bounded.
Then $L_\epsilon \in \mathbb{I}_\epsilon$, whence $L_\epsilon+1$ is the first index from which the sequence $(Z_n)_{n \in \mathbb{N}}$ eventually stays in the region $[Z-\epsilon,Z+\epsilon]$.
It may happen that the sequence from the start lies in $[Z-\epsilon,Z+\epsilon]$, that is $\mathbb{I}_\epsilon = \emptyset$.
Then $L_\epsilon :=0$ describes this situation in a suitable way. Finally, in case that $\mathbb{I}_\epsilon$ is unbounded,
that is the sequence lies outside the interval $[Z-\epsilon,Z+\epsilon]$ infinitely often, then $L_\epsilon := \infty$ fits.
Now for $r>0$ the sequence $(Z_n)_{n \in \mathbb{N}}$ \emph{converges $\mathbb{P}-r-$quickley to} $Z$, if $\mathbb{E}[L_\epsilon^r] < \infty$ for all $\epsilon >0$.
This is denoted by $Z_n \rightarrow Z \; \mathbb{P}-r-$quickley. The following relation follows from the definition of $L_\epsilon$.
It holds for every positive real number $z$.
\begin{equation} \label{rel}
 L_\epsilon < z \quad \Longleftrightarrow \quad \sup_{k \ge [z]} |Z_k-Z| \le \epsilon,
\end{equation}
where $[z]:= \min\{k \in \mathbb{Z}: k \ge z \}$.

\vspace{0.3cm}
\begin{corollary} \label{quick}
If $m \in \mathbb{R}$ is the unique minimizer of $M$, then $\hat{m}_n \rightarrow m \; \mathbb{P}-r-$quickley for all $r>0$.
\end{corollary}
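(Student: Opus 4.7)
The plan is to translate $r$-quick convergence into a tail estimate for $\sup_{k\ge n}|\hat m_k - m|$ and then invoke the exponential bound already established in Theorem \ref{expb}. By definition, I need to show $\mathbb{E}[L_\epsilon^r]<\infty$ for every $\epsilon>0$, where $L_\epsilon=\sup\{k\in\mathbb{N}:|\hat m_k-m|>\epsilon\}$ takes values in $\{0,1,2,\ldots\}\cup\{\infty\}$.

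First I would exploit the relation (\ref{rel}) with an integer argument $z=n$, which gives $\{L_\epsilon \ge n\}=\{\sup_{k\ge n}|\hat m_k-m|>\epsilon\}$. Since $L_\epsilon$ is nonnegative and integer-valued, a routine partial-summation identity (equivalently, Abel summation of $n^r-(n-1)^r$) yields a bound of the form
\begin{equation*}
 \mathbb{E}[L_\epsilon^r]\ \le\ 1+C_r\sum_{n=1}^\infty n^{r-1}\,\mathbb{P}(L_\epsilon\ge n)\ =\ 1+C_r\sum_{n=1}^\infty n^{r-1}\,\mathbb{P}\!\left(\sup_{k\ge n}|\hat m_k-m|>\epsilon\right),
\end{equation*}
for a finite constant $C_r$ depending only on $r>0$. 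In particular, if the right-hand side converges, then $\mathbb{P}(L_\epsilon=\infty)=0$ automatically.

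Next, since $m$ is the unique minimizer of $M$, Theorem \ref{expb} gives $d(\epsilon)>0$, so inequality (\ref{i3}) produces a strictly positive constant $\alpha(\epsilon):=2\,d(\epsilon)^2/A(\epsilon)^2$ with
\begin{equation*}
 \mathbb{P}\!\left(\sup_{k\ge n}|\hat m_k-m|>\epsilon\right)\ \le\ 2\,e^{-\alpha(\epsilon)\,n}\qquad\forall\,n\in\mathbb{N}.
\end{equation*}
Substituting into the previous bound, the geometric-type series $\sum_{n\ge 1}n^{r-1}e^{-\alpha(\epsilon)n}$ converges for every $r>0$, so $\mathbb{E}[L_\epsilon^r]<\infty$. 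Since $\epsilon>0$ was arbitrary, this is precisely $\hat m_n\to m$ $\mathbb{P}$-$r$-quickly.

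There is no genuine obstacle here: the quantitative heavy lifting is done by the supremal exponential inequality (\ref{i3}), and the only conceptual step is the passage from $\mathbb{E}[L_\epsilon^r]$ to a sum of tail probabilities of the suprema via (\ref{rel}). The point worth emphasising is that a one-sided tail bound for $|\hat m_n-m|$ alone (as in Niemiro's inequality (\ref{Niemiro})) would not suffice; it is crucial that Theorem \ref{expb} controls the supremum $\sup_{k\ge n}|\hat m_k-m|$, which is exactly the object encoded by $L_\epsilon$.
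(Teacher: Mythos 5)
Your proof is correct and follows essentially the same route as the paper: both reduce $\mathbb{E}[L_\epsilon^r]<\infty$ to the convergence of a series of tail probabilities, convert those tails via (\ref{rel}) into supremal tail probabilities, and finish with the exponential inequality (\ref{i3}) together with the positivity of $d(\epsilon)$ guaranteed by the uniqueness of $m$. The only (immaterial) difference is bookkeeping: the paper sums $\mathbb{P}(L_\epsilon^r\ge n)=\mathbb{P}(L_\epsilon\ge n^{1/r})$ directly, whereas you use the weighted tail sum $\sum_{n\ge 1} n^{r-1}\,\mathbb{P}(L_\epsilon\ge n)$; both are standard, equivalent expressions of the $r$-th moment and lead to the same convergent exponential series.
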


\begin{proof} Since
\begin{eqnarray*}
 \mathbb{P}(L_\epsilon^r \ge n) &=& \mathbb{P}(L_\epsilon \ge n^{1/r})\\
                                &=& \mathbb{P}( \sup_{k \ge [n^{1/r}]}|\hat{m}_k-m| > \epsilon) \quad \text{by } (\ref{rel})\\
                                &\le& 2 \exp\{-2 A(\epsilon)^{-2} [n^{1/r}] d(\epsilon)^2\} \quad \text{by Theorem } \ref{expb}\\
                                &\le& 2 \exp\{-2 A(\epsilon)^{-2} n^{1/r} d(\epsilon)^2\} \quad \text{by } [z] \ge z,
\end{eqnarray*}
it follows for each positive $\epsilon$ that $\sum_{n \ge 1} \mathbb{P}(L_\epsilon^r \ge n) < \infty$, because $A(\epsilon)^{-2}>0$ and $d(\epsilon) >0$ by uniqueness of $m$ as was shown in the proof of Theorem \ref{expb}. From the convergence of the series we can conclude
that $\mathbb{E}[L_\epsilon^r] < \infty$ for all $\epsilon>0$, which gives the desired result.
\end{proof}

Next notice that $n D^+M(m+x)^2= \delta_n(x)^2$, where the functions $\delta_n(x)=\sqrt{n} D^+M(m+x), n \in \mathbb{N},$ play a crucial role in the distributional convergence of the M-estimators $\hat m_n$. In fact in Theorem 3.4 of Ferger \cite{Ferger3} we show the following result:\\

Suppose $D^+M(m)=0=D^-M(m)$ and $\sigma^2:=\mathbb{E}[D^+h(X_1,m)^2] \in (0,\infty)$. Further assume that $(a_n)_{n \in \mathbb{N}}$ is a positive sequence converging to infinity such that
\begin{equation} \label{an}
\delta_n(x/a_n)=\sqrt{n} D^+M(m+x/a_n) \rightarrow \delta(x) \; \text{ for all } x \in \mathbb{R}.
\end{equation}

 Then $\delta:\mathbb{R} \rightarrow [-\infty,\infty]$ is increasing with $\delta \ge 0$ on $[0,\infty)$, $\delta \le 0$ on $(-\infty,0)$ and $\delta(0)=0$. Notice that $\delta$ can take the values $-\infty$ or $\infty$. In view of Smirnov's \cite{Smirnov} results for quantile esimators it is assumed that there are the following four types of limit functions. To introduce them put $I_\delta = \{t \in \mathbb{R}: -\infty < \delta(t) < \infty\}$.
We say that $\delta$ is of type 1, 2, 3 or 4 according as $I_\delta$ is equal to $[0,\infty), (-\infty,0], (-\infty,\infty)$ or $[-c_1,c_2]$, respectively, where $c_1,c_2 \ge 0$ and $\max\{c_1,c_2\}>0$. Moreover, the $\delta$'s of type 1-3 are supposed to be continuous and strictly increasing on $I_\delta$, and $\delta$ of type 4 is assumed to be equal to zero on $(-c_1,c_2)$ with finite one-sided limits at the end-points, which necessarily are equal to zero. In Theorem 3.4 \cite{Ferger3} we obtain:
\begin{equation} \label{dconv}
 \lim_{n \rightarrow \infty}\mathbb{P}(a_n(\hat{m}_n-m) \le x) =  \Phi_\sigma(\delta(x))=:H(x)  \quad \text{for all continuity points } x \text{ of } H,
\end{equation}
where $\Phi_\sigma$ is the distribution function of the centred normal distribution $N(0,\sigma^2)$ with variance $\sigma^2$.
Notice that the discontinuity points $x$ of $H$ are exactly the boundary points of $I_\delta$, i.e., $x=0$ for $\delta$ of type $1$ or $2$ and
$x \in \{-c_1,c_2\}$ for $\delta$ of type $4$, whereas for $\delta$ of type $3$ all $x \in \mathbb{R}$ are continuity points of $H$.
If $\delta$ is of type 1 or 3, then $H$ is a distribution function. In the other two cases, $H$ can be modified to a distribution function by
redefining $H(0):=1$ for type 2 and $H(-c_1):=1/2, H(c_2):=1$ for type 4, so that
the convergence (\ref{dconv}) still holds. This shows that $a_n(\hat{m}_n-m)$ converges in distribution to a random variable $V \sim H$.\\

Further conclusions from Theorem \ref{expb} are the following asymptotic tail bounds:\\

\begin{corollary} \label{atb} Under the assumptions of Theorem \ref{expb} suppose that $(a_n)_{n \in \mathbb{N}}$ satisfies condition (\ref{an}).
\begin{itemize}
\item[(1)]
If the right limits $\alpha_+:=a(0+) \in \mathbb{R}$ and $\beta_+:=b(0+) \in \mathbb{R}$ of $a$ and $b$ at $0$ exist, then
$$
 \limsup_{n \rightarrow \infty} \mathbb{P}(a_n \sup_{k \ge n} (\hat{m}_k-m) >x) \le \exp\{-2(\beta_+-\alpha_+)^{-2} \delta(x)^2\} \quad \forall \; x>0.
$$
\item[(2)]
If the left limits $\alpha_-:=a(0-) \in \mathbb{R}$ and $\beta_-:=b(0-)\in \mathbb{R}$ of $a$ and $b$ at zero exist, then
$$
 \limsup_{n \rightarrow \infty} \mathbb{P}(a_n \inf_{k \ge n} (\hat{m}_k-m) <x) \le \exp\{-2(\beta_--\alpha_-)^{-2} \delta(x)^2\} \quad \forall \; x<0.
$$
\item[(3)]
If the right and left limits of $a$ and $b$ at $0$ exist, then
$$
 \limsup_{n \rightarrow \infty} \mathbb{P}(a_n \sup_{k \ge n} |\hat{m}_k-m| >x) \le 2 \exp\{-2\tau^{-2} \Delta(x)^2\} \quad \forall \; x>0,
$$
where $\tau=\max\{\beta_+-\alpha_+,\beta_--\alpha_-\}$ and $\Delta(x)=\min\{\delta(x),-\delta(-x)\}$.
\end{itemize}
\end{corollary}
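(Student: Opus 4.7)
The plan is to apply the finite-sample inequalities of Theorem \ref{expb} with the fixed level $x$ replaced by the shrinking radius $x/a_n$, and then pass to the limit $n \to \infty$, using hypothesis (\ref{an}) together with the assumed one-sided limits of $a$ and $b$ at zero.

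For part (1), I would start from the trivial rescaling
\begin{equation*}
\mathbb{P}\bigl(a_n \sup_{k \ge n}(\hat m_k - m) > x\bigr) \;=\; \mathbb{P}\bigl(\sup_{k \ge n}(\hat m_k - m) > x/a_n\bigr),
\end{equation*}
and then invoke inequality (\ref{i1}) for every $n$ large enough that $x/a_n > 0$, which produces the upper bound $\exp\bigl\{-2(b(x/a_n)-a(x/a_n))^{-2}\,\delta_n(x/a_n)^2\bigr\}$, using the identity $n\,D^+M(m+x/a_n)^2 = \delta_n(x/a_n)^2$. Since $a_n \uparrow \infty$ forces $x/a_n \downarrow 0^+$, the right-limit hypothesis yields $b(x/a_n) \to \beta_+$ and $a(x/a_n) \to \alpha_+$, while assumption (\ref{an}) gives $\delta_n(x/a_n) \to \delta(x)$. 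Passing to $\limsup_n$ delivers the stated bound. Part (2) is handled symmetrically: for $x < 0$ set $y := -x/a_n > 0$ and apply (\ref{i2}); the arguments of $a,b$ in the resulting bound are $x/a_n$, which approach $0$ from the left, producing $\alpha_-,\beta_-$ in the limit, while the exponent is again $\delta_n(x/a_n)^2 \to \delta(x)^2$.

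Part (3) follows either by combining parts (1) and (2) via the event identity (\ref{supinf}) and a union bound, with the two exponential terms dominated by a single factor of $2$ times the exponential at the worse exponent (which introduces $\tau$ and $\Delta(x)$), or equivalently by substituting $x/a_n$ directly into (\ref{i3}) and identifying the limits $A(x/a_n) \to \tau$ and $d(x/a_n) \to \Delta(x)$. The only real obstacle is bookkeeping at degenerate values: when $\delta(x) = \pm\infty$ the exponent diverges to $-\infty$ and the bound is trivially $0$, which is consistent with the formula under the convention $\mathrm{e}^{-\infty} = 0$; and one may assume $\beta_\pm > \alpha_\pm$ without loss of generality by enlarging $b$ by a positive constant in (\ref{b}), exactly as noted in the proof of Corollary \ref{rcomplconv}. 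Once these limit interchanges are justified, no further calculation is required.
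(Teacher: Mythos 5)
Your proposal is correct and follows essentially the same route as the paper: substitute $x/a_n$ into (\ref{i1}) and (\ref{i3}) and $-x/a_n$ into (\ref{i2}), then let $n\to\infty$ using (\ref{an}) and the assumed one-sided limits of $a$ and $b$ at zero. The extra remarks on the degenerate cases $\delta(x)=\pm\infty$ and $\beta_\pm=\alpha_\pm$ are sensible bookkeeping that the paper leaves implicit.
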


\begin{proof} We apply inequalities (\ref{i1}) and (\ref{i3}) with $x$ replaced by $x/a_n$ and inequality (\ref{i2}) with $x$ replaced by $-x/a_n$.
Then taking the limit $n \rightarrow \infty$ yields the result upon noticing (\ref{an}).
\end{proof}

\begin{remark} \label{conj} If $|\delta(x)| \rightarrow \infty, |x| \rightarrow \infty$, then it follows from the above corollary that
$$
 \lim_{x \rightarrow \infty} \limsup_{n \rightarrow \infty} \mathbb{P}(a_n \sup_{k \ge n} |\hat{m}_k-m| >x)=0.
$$
Thus the sequence $(a_n \sup_{k \ge n} |\hat{m}_k-m|)_{n \in \mathbb{N}}$ is tight and hence by Prokorov's Theorem it is
relatively compact. Since $|\sup S| \le \sup |S|$ and $|\inf S| \le \sup |S|$ for every non-empty $S \subseteq \mathbb{R}$, it follows
that the sequences \newline
$(a_n \sup_{k \ge n} (\hat{m}_k-m))_{n \in \mathbb{N}}$ and
$(a_n \inf_{k \ge n} (\hat{m}_k-m))_{n \in \mathbb{N}}$
are relatively compact as well. Given this, we conjecture that these three sequences actually converge in distribution.
\end{remark}

\vspace{0.4cm}
In the sequel we consider bivariate functions $h(x,t)=v(t-x)$ with convex $v:\mathbb{R} \rightarrow \mathbb{R}$.
As it becomes clear further below, it is convenient to construct the functions $v$ as follows:
Let $\varphi:\mathbb{R} \rightarrow \mathbb{R}$ be a nondecreasing and right-continuous function.
Then
\begin{equation} \label{vphi}
v(u):=v_\varphi(u):=\int_{0}^u \varphi(s) ds, \; u \in \mathbb{R},
\end{equation}
is convex, confer e.g. section 1.6 in Niculescu and Persson \cite{Niculescu}. Consequently, we can apply our results from section 3.
By Theorem 24.2 in Rockafellar \cite{Rockafellar}  $D^- v(u) = \varphi(u-)$ and $D^+v(u)=\varphi(u+)=\varphi(u)$ by right continuity. Thus the requirement $D^-v(0) \le 0 \le D^+v(0)$ is the same as
$\varphi(0-) \le 0 \le \varphi(0).$
Moreover, $v_\varphi$ is differentiable at a point $u \in \mathbb{R}$ if and only if $\varphi$ is continuous at $u$.
Typically, but not necessarily, the graph of $\varphi$ is roughly $S$-shaped, that is $\varphi$ is a sigmoid function, which is everywhere continuous, except with a possible jump at zero.
If actually $\varphi(u) < 0$ for all $u < 0$ and $\varphi(u)>0$ for all $u>0$ (in brief $\varphi <>0$), then $v_\varphi$ is strictly convex
and $m$ is unique. It satisfies the equation $\int_\mathbb{R} \varphi(m-s)Q(ds)=0$. Analogously, the estimator $\hat{m}_n$ is also unique and given as the solution $t=\hat{m}_n$ of $\int_\mathbb{R} \varphi(t-s) Q_n(ds)=0$.
Since $D^+h(X_i,m+x)=\varphi(m+x-X_i)$, the boundedness assumption (\ref{b}) is fulfilled, whenever $\varphi$ is bounded. Then a possible choice are the constant functions $a(x)=\inf_{u \in \mathbb{R}} \varphi(u)$ and $b(x)=\sup_{u \in \mathbb{R}} \varphi(u)$. If $\varphi$ is unbounded, but has an inverse $\varphi^{-1}$, then we need that the $X_i$ are bounded, say $A \le X_i \le B$ for
some constants $A<B$. In fact in that case (\ref{b}) is satisfied with $a(x)=\varphi(m+x-B)$ and $b(x)=\varphi(m+x-A)$ as one can easily verify.\\

Our first example is simple, but it yields a surprising result.\\

\begin{example} \label{id} (\textbf{Hoeffding}) Let $\varphi(u)=u$. Then $0=\int \varphi(m-s)Q(ds)=m-\mathbb{E}[X_1]$, whence $m=\mathbb{E}[X_1]=: \mu$.
Replacing $Q$ by $Q_n$ gives the estimator $\hat{m}_n=\bar{X}_n := 1/n \sum_{i=1}^n X_i$, the arithmetic mean of the data.
Moreover, $D^+M(m+x)=\int_\mathbb{R} \varphi(m+x-s) Q(ds)= m+x-\mu=x$. If $A \le X_i \le B$ for every $1 \le i \le n$, then
$b(x)-a(x)=B-A$. Thus for instance the exponential inequality (\ref{i3}) in Theorem \ref{expb} reduces to
\begin{equation} \label{supHoeffding}
 \mathbb{P}\Big(\sup_{k \ge n}|\hat{m}_k-m|>x\Big)=\mathbb{P}\Big(\sup_{k \ge n}|\bar{X}_k-\mu| > x\Big) \le 2 \exp\{-2 (B-A)^{-2}n x^2\}.
\end{equation}
Compare this with Hoeffding's inequality:
$$
  \mathbb{P}\Big(|\bar{X}_n-\mu| > x\Big) \le 2 \exp\{-2 (B-A)^{-2}n x^2\}.
$$
We see that in Hoeffding's inequality one can replace $|\bar{X}_n-\mu|$ by the much larger variable $\sup_{k \ge n}|\bar{X}_k-\mu| \ge |\bar{X}_n-\mu|$ while maintaining the same exponential bound. This is a substantial extension of Hoeffding's result.
\end{example}

\vspace{0.2cm}
When certain symmetries are present, the inequality (\ref{i3}) simplifies.\\

\begin{remark} \label{symcase} (\textbf{Symmetric case}) Assume that $Q$ is symmetric at $m \in \mathbb{R}$ and $\varphi<>0$ is an odd function in the sense that $\varphi(-u)=-\varphi(u)$ for every $u \neq 0$. Then $v_\varphi$ is even (and strictly convex). Therefore $m$ is the unique minimizer of
$M$. Moreover,
\begin{eqnarray*}
D^+M(m-x)&=&\mathbb{E}[\varphi(m-x-X_1)]=-\mathbb{E}[\varphi(-m+x+X_1)]=-\mathbb{E}[\varphi(m+x-X_1)]\\
         &=&-D^+(m+x) \quad \forall \; x \in \mathbb{R}.
\end{eqnarray*}
If in addition $\varphi(0)=0$, i.e., $\varphi(-u)=-\varphi(u)$ for all $u \in \mathbb{R}$, then $a(-x)=-b(x)$ and $b(-x)=-a(x)$ for all $x>0$, whence
$A(x)=b(x)-a(x)$ and $d(x)=D^+M(m+x)$.
In summary we get
$$
\mathbb{P}(\sup_{k \ge n} |\hat{m}_k-m| >x) \le 2 \exp\{-2 (b(x)-a(x))^{-2} n D^+M(m+x)^2\} \quad \forall \; x>0.
$$
If $\varphi$ is bounded, then the constant $2 (b(x)-a(x))^{-2}$ can be replaced by $\frac{1}{2}||\varphi||^{-2}$, where $||\varphi||$ is the sup-norm of $\varphi$.
\end{remark}

\vspace{0.5cm}
Recall the assumption (\ref{DpMlb}), which was pivotal for the speed of $r-$complete convergence. Suppose $\varphi$ is differentiable on $\mathbb{R}$ with bounded derivative, i.e., $||\varphi^\prime||< \infty$. If $D^+M(m)=0$, then
\begin{eqnarray}
 D^+M(m+x)&=&D^+M(m+x)-D^+M(m)=\int_\mathbb{R} \varphi(m+x-s)-\varphi(m-s) Q(ds) \nonumber\\
          &=&\int_\mathbb{R}\frac{\varphi(m+x-s)-\varphi(m-s)}{x} Q(ds) \;x. \label{lbphi}
\end{eqnarray}
Since the integrand in the last integral (\ref{lbphi}) is dominated by $||\varphi^\prime||$, it follows from the Dominated Convergence Theorem that
\begin{equation} \label{intphi}
\int_\mathbb{R}\frac{\varphi(m+x-s)-\varphi(m-s)}{x} Q(ds) \rightarrow \int_\mathbb{R} \varphi^\prime(m-s) Q(ds), \;x \rightarrow 0.
\end{equation}
Assume that $\varphi^\prime$ is positive, i.e., $\varphi^\prime(u) >0$ for all $u \in \mathbb{R}$. Then the integral on the right side of (\ref{intphi}) is positive.
Indeed, the integrand $\varphi^\prime(m-s) \ge 0$ for all $s \in \mathbb{R}$, because $\varphi$ is monotone increasing. Suppose the integral is equal to zero. Then $\varphi^\prime(m-s)=0$ for $Q-$almost all $s \in \mathbb{R}$. In particular, there exists some $s \in \mathbb{R}$ such that $\varphi^\prime(m-s)=0$ in contradiction to positiveness of $\varphi^\prime$. Infer from (\ref{intphi}) that
\begin{equation} \label{halb}
 \int_\mathbb{R}\frac{\varphi(m+x-s)-\varphi(m-s)}{x} Q(ds) \ge \frac{1}{2}\int_\mathbb{R} \varphi^\prime(m-s) Q(ds) \quad \forall \; x \in [-\delta,\delta]
\end{equation}
for some $\delta$ sufficiently small. Combining (\ref{lbphi}) with (\ref{halb}) leads to the following\\

\begin{lemma} \label{pivot} Assume that $\varphi$ in (\ref{vphi}) has a positive and bounded derivative $\varphi^\prime$ and that $D^+M(m)=0$.
Then  there exists a real $\delta>0$ such that
\begin{equation} \label{DpMlbphi}
 |D^+M(m+x)| \ge c \; |x| \quad \forall \; x \in [-\delta,\delta],
\end{equation}
where $c=\frac{1}{2}\int_\mathbb{R} \varphi^\prime(m-s) Q(ds)$ is positive. (In fact the constant $\frac{1}{2}$ can be replaced by any
constant strictly smaller than $1$.)
\end{lemma}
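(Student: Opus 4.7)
My plan is to repackage the argument sketched in the paragraph immediately preceding the lemma into a clean proof, emphasizing the three ingredients: a difference-quotient representation, Dominated Convergence, and the strict positivity of the limit.

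The starting point is the observation that, because $D^+M(m)=0$, one can write for $x\neq 0$
\begin{equation*}
D^+M(m+x) = D^+M(m+x)-D^+M(m) = x\int_\mathbb{R}\frac{\varphi(m+x-s)-\varphi(m-s)}{x}\,Q(ds),
\end{equation*}
where the equality $D^+M(t)=\int\varphi(t-s)\,Q(ds)=\int D^+v(t-s)\,Q(ds)$ comes from (\ref{Dv}) together with $D^+v=\varphi$. By the mean value theorem applied to $\varphi$ (which is differentiable with bounded derivative by assumption), the integrand is dominated in absolute value by the finite constant $\sup_{u\in\mathbb{R}}|\varphi^\prime(u)|$. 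Moreover, since $\varphi$ is monotone increasing, this difference quotient is non-negative for every $x$ and every $s$.

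Next I would invoke the Dominated Convergence Theorem to conclude that, as $x\to 0$,
\begin{equation*}
\int_\mathbb{R}\frac{\varphi(m+x-s)-\varphi(m-s)}{x}\,Q(ds)\;\longrightarrow\;\int_\mathbb{R}\varphi^\prime(m-s)\,Q(ds)=:2c.
\end{equation*}
I then need to verify $c>0$. Since $\varphi^\prime(u)>0$ for every $u\in\mathbb{R}$, the non-negative integrand $\varphi^\prime(m-s)$ is strictly positive everywhere on $\mathbb{R}$; if its integral against $Q$ were zero, the integrand would vanish $Q$-almost everywhere, forcing the empty set $\{s:\varphi^\prime(m-s)=0\}$ to carry full $Q$-mass, a contradiction.

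Finally, by the definition of convergence there exists $\delta>0$ such that for every $x\in[-\delta,\delta]\setminus\{0\}$ the difference-quotient integral is at least $c$. Multiplying by $|x|$ and using that the integral is non-negative (so that the product with $x$ has sign exactly that of $x$), I get
\begin{equation*}
|D^+M(m+x)| = |x|\,\int_\mathbb{R}\frac{\varphi(m+x-s)-\varphi(m-s)}{x}\,Q(ds)\;\ge\;c\,|x|
\end{equation*}
for all $x\in[-\delta,\delta]\setminus\{0\}$, and the bound holds trivially at $x=0$. The only step requiring genuine care is the sign bookkeeping in the final display (the difference quotient is non-negative regardless of the sign of $x$, so pulling $|x|$ outside is legitimate); the rest is routine given the apparatus already collected before the lemma statement. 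The remark in parentheses that $\tfrac12$ may be replaced by any constant strictly less than $1$ follows by choosing $\delta$ even smaller.
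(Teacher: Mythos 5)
Your proof is correct and follows essentially the same route as the paper: the difference-quotient representation using $D^+M(m)=0$, domination by $\|\varphi^\prime\|$ and the Dominated Convergence Theorem to identify the limit $\int_\mathbb{R}\varphi^\prime(m-s)\,Q(ds)$, strict positivity of that limit, and the choice of $\delta$ so that the difference-quotient integral stays above half the limit. The explicit sign bookkeeping you add at the end is a harmless elaboration of what the paper leaves implicit.
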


\vspace{0.4cm}
Finally, we would like to determine the crucial function
\begin{equation} \label{Qint}
K(x):=D^+M(m+x)=\int_\mathbb{R}\varphi(m+x-s) Q(ds), \; x \in \mathbb{R}.
\end{equation}
We have already seen two examples. The $\alpha-$quantile arises from $\varphi(u)= 1_{\{u \ge 0\}}-\alpha$,
which yields $D^+M(m+x)=F(m+x)-\alpha$. And $\varphi(u)=u$ induces $D^+M(m+x)=x$ as shown in Example \ref{id}.\\

The following observation is very useful, because it makes the integral in (\ref{Qint}) in some situations analytically tractable.
Namely, if $Q$ has a Lebesgue-density $f$, then $K(x)= \varphi \star f(m+x)$,
where $\star$ denotes the convolution-operator.
Thus by the Differentiation-Lemma (which can be applied for instance if $||\varphi^\prime||< \infty$) it follows that
\begin{equation} \label{diffeq}
 K^\prime(x)= \varphi^\prime \star f(m+x).
\end{equation}
Consequently, $K(x)=\int \varphi^\prime \star f(m+x) dx + C$ is the primitive, where the integration constant $C$ is uniquely determined by the constrained $K(0)=0$.\\

\begin{example} (\textbf{Normal})\label{normal} Assume $F=\Phi_{\mu,\sigma^2}$ is the distribution function of the normal law $N(\mu,\sigma^2)$ with pertaining density
$$
\phi_{\mu,\sigma^2}(s)=\frac{1}{\sqrt{2 \pi \sigma^2}} \exp\{-\frac{(s-\mu)^2}{2 \sigma^2}\}.
$$
Let $\varphi(u)=\Phi(u)-1/2$ with $\Phi:=\Phi_{0,1}$. Then $m=\mu$ according to Remark \ref{symcase} and
$$
K(x)=\int_\mathbb{R}(\Phi_{0,1}(\mu+x-s)-1/2) N(\mu,\sigma^2)(ds)= \int_\mathbb{R}(\Phi_{0,1}(\mu+x-s)-1/2) \phi_{\mu,\sigma^2}(s)ds.
$$
By the Differentiation-Lemma
$$
K^\prime(x)=\int_\mathbb{R}\phi_{0,1}(\mu+x-s) \phi_{\mu,\sigma^2}(s)ds =\phi_{0,1} \star \phi_{\mu,\sigma^2}(\mu+x)=\phi_{\mu,1+\sigma^2}(\mu+x)=\phi_{0,1+\sigma^2}(x),
$$
where the second last equality is the additive property of the normal distribution. Integration yields
$$
 D^+M(m+x)= \Phi\Big(\frac{x}{\sqrt{1+\sigma^2}}\Big)-1/2.
$$
\end{example}

\begin{example} (\textbf{Cauchy}) Let $F=F_a$ be the Cauchy-distribution function with location parameter $a \in \mathbb{R}$ and let $\varphi=F_0-1/2$. Then
by symmetry $m=a$. Using the additive property of the Cauchy-distribution, the same arguments as in the example above lead to
$$
 D^+M(m+x)=\frac{1}{\pi} \arctan(x).
$$
\end{example}

\begin{example} (\textbf{Uniform}) \label{uniform} For reals $a<b$ let $F$ be the distribution function of $U(a,b)$, the uniform distribution on $[a,b]$.
Consider $\varphi=\Phi-1/2$, so that by symmetry $m=(a+b)/2$. Furthermore, if $\phi:=\phi_{0,1}$, then
$$
 K^\prime(x)= \int_\mathbb{R} \phi(m+x-s) U(a,b)(ds)=\frac{1}{b-a} \int_a^b \phi(m+x-s) ds = \frac{1}{b-a} \int_{m+x-b}^{m+x-a} \phi(u)du.
$$
Consequently, $K^\prime(x)=\frac{1}{b-a}\{\Phi(m+x-a)-\Phi(m+x-b)\} = \frac{1}{b-a}\{\Phi(x+(b-a)/2)-\Phi(x-(b-a)/2)\}$.
Now, the primitive of $\Phi(s)$ is equal to $H(s):=s \Phi(s)+\phi(s)$ as one can easily verify by differentiation.
Thus
$$
 K(x)=\frac{1}{b-a}\{H(x+(b-a)/2)-H(x-(b-a)/2)\}-1/2,
$$
where the constant $1/2$ arises from the constraint $K(0)=0$.
\end{example}

\begin{example} As already seen $\varphi(u)=1_{\{u \ge 0\}}-1/2$ corresponds to the median ($\alpha=1/2$). This is a simple step function
with value $-1/2$ on $(-\infty,0)$ and $1/2$ on $[0,\infty)$. We change this discontinuous function to a continuous function.
For $c>0$ let $\varphi_c=\varphi$ outside the interval $[-c,c]$ and on that interval let $\varphi_c$ be the straight line connecting the points
$(-c,-1/2)$ and $(c,1/2)$ i.e., $\varphi_c(u)= \frac{1}{2c} u, u \in [-c,c]$. Assume $Q$ is symmetric at $0$. From Remark \ref{symcase} we know that $m=0$. Thus by (\ref{Qint})
\begin{eqnarray*}
& &D^+M(x) = \int_\mathbb{R} \varphi_c(x-s) Q(ds)\\
         &=& \int_{(-\infty,x-c]} 1/2 Q(ds)-\int_{(x+c,\infty)} 1/2 Q(ds)+\frac{1}{2c}\int_{(x-c,x+c]}x-s Q(ds)\\
         &=& 1/2 F(x-c)-1/2(1-F(x+c))+\frac{1}{2c}x(F(x+c)-F(x-c))-\frac{1}{2c}\int_{x-c}^{x+c} s F(ds).
\end{eqnarray*}
Integration by parts yields
$$
 \int_{x-c}^{x+c} s F(ds)= (x+c)F(x+c)-(x-c)F(x-c)-\int_{x-c}^{x+c} F(s) ds,
$$
so that after some simple algebra we arrive at
\begin{equation} \label{int}
 D^+M(x)= \frac{1}{2c} \int_{x-c}^{x+c} F(s) ds -\frac{1}{2}.
\end{equation}

If for instance $F=\Phi$, then
$$
\int_{x-c}^{x+c} F(s) ds=\int_{x-c}^{x+c} \Phi(s) ds =(x+c)\Phi(x+c)-(x-c)\Phi(x-c)+\phi(x+c)-\phi(x-c).
$$
This is so, because the primitive of $\Phi(s)$ is equal to $s \Phi(s)+\phi(s)$ as we know from
from Example \ref{uniform}.

Observe that $\frac{1}{2c} \int_{x-c}^{x+c} F(s) ds \rightarrow F(x)$ as $c \rightarrow 0$, which leads us back to $D^+M(x)=F(x)-1/2$ pertaining to the median.
\end{example}

\vspace{0.2cm}
Our next example combines the median and the mean. It goes back to Huber \cite{Huber}, confer also Serfling \cite{Serfling}, p.247.\\

\begin{example} (\textbf{Huber}) Assume that $Q$ is symmetric at $0$. For some fixed $c>0$ put
$$
 \varphi(u):= \left\{ \begin{array}{l@{\quad,\quad}l}
                 -c & u <c \\ u & |u|\le c \\
                 c  & u >c
               \end{array} \right.
$$
Again according to Remark \ref{symcase} we have that $m=0$ and similarly as in the example before one obtains:
$$
 D^+M(x)=\int_{x-c}^{x+c}F(s) ds -c.
$$
\end{example}

\begin{example}
Let $\varphi(u):=u/\sqrt{1+u^2}, u \in \mathbb{R},$ and $Q$ be the uniform distribution on $[-1,1]$. Then $m=0$ and
$$
 D^+M(x)= \int_\mathbb{R} \varphi(x-s) Q(ds)= \frac{1}{2} \int_{-1}^1 \varphi(x-s)ds=\frac{1}{2} \int_{x-1}^{x+1} u/\sqrt{1+u^2} du
$$
by substitution $u=x-s$. Now check that $\sqrt{u^2+1}$ is the primitive of $u/\sqrt{1+u^2}$, whence we obtain:
$$D^+M(x)=\frac{1}{2}\Big(\sqrt{(x+1)^2+1}-\sqrt{(x-1)^2+1}\Big)$$.
\end{example}

\section{Polynomial bounds under moment-conditions}
Assume that instead of the boundedness condition (\ref{b}) we only have that $D^+h(X_1,\cdot)$ possesses $s$-th absolute moments in an arbitrarily small
neighborhood of $m$. More precisely let $s \ge 1$ and suppose that there exists some $x_0>0$ such that:
\begin{equation} \label{rm}
 R_s(x):=\mathbb{E}[|D^+h(X_1,m+x)|^s] < \infty \quad \text{for all } x \in [-x_0,x_0].
\end{equation}

\begin{theorem} \label{poly} If (\ref{rm}) holds with $s \ge 2$ and $m$ is unique, then there exist positive real constants $B_s$ such that
for all $x \in (0,x_0]$ the following inequalities hold:
\begin{align}
&\mathbb{P}(\sup_{k \ge n} (\hat{m}_k-m) >x) \le 2^s B_s R_s(x) \big(D^+M(m+x)\big)^{-s}  n^{-s/2}, \label{in1}\\
&\mathbb{P}(\inf_{k \ge n} (\hat{m}_k-m) < -x) \le  2^s B_s R_s(x) \big(-D^+M(m-x)\big)^{-s}  n^{-s/2}, \label{in2}\\
&\mathbb{P}(\sup_{k \ge n} |\hat{m}_k-m| >x) \le 2^{s+1} B_s \mu_s(x) d(x)^{-s}  n^{-s/2}, \label{in3}
\end{align}
where
$$
d(x)=\min\{D^+M(m+x),-D^+M(m-x)\}>0, \;\mu_s(x)=\max\{R_s(x),R_s(-x)\}<\infty.
$$
\end{theorem}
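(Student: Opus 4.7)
The plan is to mimic the proof of Theorem~\ref{expb} word-for-word up to the reverse-martingale representation, and then replace the Hoeffding/Chernoff step by an $L^s$-maximal argument combined with the Marcinkiewicz--Zygmund inequality.

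First, I retain everything in the proof of Theorem~\ref{expb} through equation~(\ref{mart}). Setting $\xi_i := -(D^+h(X_i,m+x)-\mathbb{E}[D^+h(X_i,m+x)])$, $T_k := k^{-1}\sum_{i=1}^{k}\xi_i$ and $\mathcal{G}_k := \sigma(T_j : j \ge k)$, the sequence $(T_k,\mathcal{G}_k)$ is a reverse martingale. Uniqueness of $m$ gives $\eta := D^+M(m+x) > 0$ for $x>0$, exactly as in the last part of Theorem~\ref{expb}. The proof of Theorem~\ref{expb} then shows
\begin{equation*}
\bigcup_{n \le k \le l}\{\hat m_k - m > x\} \;=\; \bigcup_{1 \le j \le l-n+1}\{T_{l-j+1} > \eta\} \;\subseteq\; \bigl\{\max_{1 \le j \le l-n+1}|T_{l-j+1}|^s > \eta^s\bigr\},
\end{equation*}
where the last inclusion uses $\eta>0$. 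Note that $(T_{l-j+1},\mathcal{G}_{l-j+1})_{1\le j \le l-n+1}$ is a (forward) martingale, so by convexity of $z \mapsto |z|^s$ for $s\ge 1$, $(|T_{l-j+1}|^s,\mathcal{G}_{l-j+1})_{1\le j \le l-n+1}$ is a nonnegative submartingale.

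Second, I apply Doob's maximal inequality in $L^s$ to this submartingale and pass $l \to \infty$ as in (\ref{obs1}), obtaining
\begin{equation*}
\mathbb{P}\bigl(\sup_{k \ge n}(\hat m_k - m) > x\bigr) \;\le\; \eta^{-s}\,\mathbb{E}[|T_n|^s].
\end{equation*}
To estimate $\mathbb{E}[|T_n|^s]$, I invoke the Marcinkiewicz--Zygmund inequality: for $s\ge 2$ there is a universal constant $B_s>0$ such that, for i.i.d.\ centred $\xi_i$ with $\mathbb{E}[|\xi_1|^s]<\infty$,
\begin{equation*}
\mathbb{E}\Bigl[\bigl|\textstyle\sum_{i=1}^n \xi_i\bigr|^s\Bigr] \;\le\; B_s\, n^{s/2}\,\mathbb{E}[|\xi_1|^s],
\end{equation*}
so that $\mathbb{E}[|T_n|^s] \le B_s n^{-s/2}\mathbb{E}[|\xi_1|^s]$. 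Finally the $c_r$-inequality and Jensen give $\mathbb{E}[|\xi_1|^s]\le 2^s R_s(x)$. Assembling these pieces yields inequality (\ref{in1}). The moment condition (\ref{rm}) is exactly what is needed to make $R_s(x)$ finite on $(0,x_0]$.

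Third, (\ref{in2}) follows by the analogous argument starting from the representation in (\ref{inf2}), now with $\eta = -D^+M(m-x) > 0$ (again by uniqueness, as in Theorem~\ref{expb}). Inequality (\ref{in3}) follows from the union bound (\ref{supinf}) combined with the trivial estimates $d(x)^{-s} \ge \max\{(D^+M(m+x))^{-s},(-D^+M(m-x))^{-s}\}$ and $\mu_s(x)\ge \max\{R_s(x),R_s(-x)\}$, producing the extra factor of $2$ in front of $2^s B_s \mu_s(x) d(x)^{-s} n^{-s/2}$.

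I do not expect any serious obstacle: the whole point is that the reverse-martingale framework of Theorem~\ref{expb} is flexible enough to accommodate either an exponential maximal inequality (Hoeffding, yielding the exponential bound) or a polynomial one (Doob's $L^s$-inequality plus Marcinkiewicz--Zygmund, yielding the $n^{-s/2}$ rate). The only mild care needed is the inclusion $\{T_{l-j+1}>\eta\}\subseteq\{|T_{l-j+1}|^s>\eta^s\}$, which requires $\eta>0$ and hence the uniqueness of $m$.
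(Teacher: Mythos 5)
Your proposal is correct and follows essentially the same route as the paper: the reverse-martingale representation from Theorem~\ref{expb}, the inclusion $\{T_{l-j+1}>\eta\}\subseteq\{|T_{l-j+1}|^s>\eta^s\}$ (valid since $\eta>0$ by uniqueness), Doob's maximal inequality for the nonnegative submartingale $|T|^s$, the Marcinkiewicz--Zygmund bound $\mathbb{E}[|\sum_{i=1}^n\xi_i|^s]\le B_s n^{s/2}\mathbb{E}[|\xi_1|^s]$, and the $c_r$/Jensen estimate $\mathbb{E}[|\xi_1|^s]\le 2^s R_s(x)$. The only cosmetic difference is that the paper handles $s=2$ separately via Bienaym\'{e}'s equality with $B_2=1$, whereas you subsume it under the general statement.
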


\begin{proof} We begin with the proof of (\ref{in1}). Recall that $\eta = D^+M(m+x)$ is positive by Theorem \ref{expb}. (The proof of this fact does not use the boundedness assumption.)
From (\ref{mart}) it follows that
\begin{eqnarray}
\mathbb{P}(\bigcup_{n \le k \le l} \{\hat{m}_k-m >x\}) &=& \mathbb{P}(\bigcup_{1 \le j \le l-n+1} \{T_{l-j+1} > \eta\})=\mathbb{P}(\bigcup_{1 \le j \le l-n+1} \{|T_{l-j+1}|^s > \eta^s\}) \nonumber\\
&=& \mathbb{P}(\max_{1 \le j \le l-n+1} |T_{l-j+1}|^s > \eta^s)\nonumber\\
&\le & \eta^{-s} \mathbb{E}[|T_n|^s] \qquad \qquad \text{ by Doob's inequality, because } \eta \text{ is positive} \nonumber\\
&=& \eta^{-s} n^{-s} \mathbb{E}[|\sum_{i=1}^n \xi_i(x)|^s]. \label{Doob}
\end{eqnarray}
Since
\begin{eqnarray*}
 |\xi_i(x)|^s&=&|D^+h(X_i,m+x)-\mathbb{E}[D^+h(X_i,m+x)]|^s \le 2^{s-1} \{|D^+h(X_i,m+x)|^s+|\mathbb{E}[D^+h(X_i,m+x)]|^s\} \nonumber\\
 &\le& 2^{s-1}\{|D^+h(X_i,m+x)|^s+\mathbb{E}[|D^+h(X_i,m+x)|^s]\} \qquad \text{ by Jensen's inequality}, \label{Jensen}
\end{eqnarray*}
it follows that
\begin{equation}
 \mathbb{E}[|\xi_i(x)|^s] \le 2^s \mathbb{E}[|D^+h(X_i,m+x)|^s]=2^s R_s(x) < \infty \quad \text{ by } (\ref{rm}).
\end{equation}
For $s>2$ a combination of H\"{o}lder's inequality with the Marcinkiewicz-Zygmund inequality yields, confer, e.g., (1.5) in Ferger (2014) \cite{Ferger1}:
\begin{equation} \label{MZH}
 \mathbb{E}[|\sum_{i=1}^n \xi_i(x)|^s] \le B_s n^{s/2} \mathbb{E}[|\xi_1(x)|^s].
\end{equation}
For $s=2$ the above inequality holds (with $B_2=1$) by Bienaym\'{e}'s equality. Thus (\ref{Doob}) - (\ref{MZH}) together with (\ref{obs1}) result in inequality (\ref{in1}). For the derivation of (\ref{in2}) we can start with (\ref{inf1}) and (\ref{inf2}), which make us to proceed as above, because $\eta:=-D^+M(m-x)$ is positive by Theorem \ref{expb}. Finally, the third inequality (\ref{in3}) follows from (\ref{supinf}), (\ref{in1}) and (\ref{in2}).
\end{proof}

In the following we derive the counterparts of Corollaries \ref{rcomplconv}, \ref{quick} and \ref{atb}. As to the first one we use a uniform version of the moment-condition (\ref{rm}):
\begin{equation} \label{grm}
 L:=L_s:=\sup_{|x| \le x_0} \mathbb{E}[|D^+h(X_1,m+x)|^s] < \infty.
\end{equation}

\begin{corollary} Suppose (\ref{DpMlb}) is fulfilled  and (\ref{grm}) holds with $s > 2$. Then
\begin{equation} \label{gamma}
 \sup_{k \ge n}|\hat{m}_k-m|=o(n^{-\gamma}) \;\; \mathbb{P} \text{-r completely for all } \gamma < \frac{1}{2} \; \text{ and for all } r< (1/2-\gamma)s.
\end{equation}
\end{corollary}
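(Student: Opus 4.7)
The plan is to apply the polynomial tail bound (\ref{in3}) from Theorem \ref{poly} pointwise at the shrinking level $x_n := \epsilon n^{-\gamma}$, and then check summability of the resulting series. The structure mimics the proof of Corollary \ref{rcomplconv}, with Hoeffding's exponential bound replaced by the Marcinkiewicz--Zygmund type polynomial bound.

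First, fix $\epsilon>0$ and write
\begin{equation*}
\mathbb{P}\bigl(n^\gamma\sup_{k\ge n}|\hat m_k-m|>\epsilon\bigr)=\mathbb{P}\bigl(\sup_{k\ge n}|\hat m_k-m|>x_n\bigr).
\end{equation*}
Since $\gamma>0$ without loss of generality, there exists $n_1$ with $x_n\le\min(x_0,\delta)$ for every $n\ge n_1$, where $\delta$ is the constant in (\ref{DpMlb}) and $x_0$ the one in (\ref{grm}). Hence $(0,x_n]\subseteq(0,\delta]$, so the argument used to derive (\ref{dlb}) in the proof of Corollary \ref{rcomplconv} gives $d(x_n)\ge c\,x_n=c\epsilon n^{-\gamma}$ for $n\ge n_1$. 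At the same time the uniform moment bound (\ref{grm}) yields $\mu_s(x_n)=\max\{R_s(x_n),R_s(-x_n)\}\le L$ for $n\ge n_1$.

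Plugging these two estimates into (\ref{in3}) gives, for all $n\ge n_1$,
\begin{equation*}
\mathbb{P}\bigl(\sup_{k\ge n}|\hat m_k-m|>x_n\bigr)\le 2^{s+1}B_s L\,(c\epsilon)^{-s}\,n^{\gamma s}\,n^{-s/2}=C(\epsilon)\,n^{-s(1/2-\gamma)},
\end{equation*}
with $C(\epsilon):=2^{s+1}B_s L(c\epsilon)^{-s}$. Consequently
\begin{equation*}
\sum_{n=1}^\infty n^{r-1}\,\mathbb{P}\bigl(n^\gamma\sup_{k\ge n}|\hat m_k-m|>\epsilon\bigr)\le\sum_{n<n_1}n^{r-1}+C(\epsilon)\sum_{n\ge n_1}n^{\,r-1-s(1/2-\gamma)}.
\end{equation*}
The first sum is finite, and the second converges precisely when $r-1-s(1/2-\gamma)<-1$, i.e. when $r<s(1/2-\gamma)$. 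This is exactly the assumed range of $r$, so the series is finite for every $\epsilon>0$, which is the definition of $r$-complete convergence.

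The only mildly delicate point is the lower bound $d(x_n)\ge cx_n$, because $d(x)$ involves the two one-sided quantities $D^+M(m+x)$ and $-D^+M(m-x)$; but this has already been worked out in the proof of Corollary \ref{rcomplconv} via Theorem 24.1 of Rockafellar \cite{Rockafellar}, and we simply quote it. Beyond that, the argument is a bookkeeping exercise in balancing the polynomial decay $n^{-s/2}$ from Theorem \ref{poly} against the blow-up $n^{\gamma s}$ coming from $d(x_n)^{-s}$ and the $n^{r-1}$ weight from the definition of $r$-complete convergence.
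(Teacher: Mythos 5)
Your proposal is correct and follows essentially the same route as the paper's own proof: apply inequality (\ref{in3}) of Theorem \ref{poly} at the level $x_n=\epsilon n^{-\gamma}$, bound $d(x_n)\ge c x_n$ via (\ref{dlb}) and $\mu_s(x_n)\le L$ via (\ref{grm}), and check that the exponent $r-1-s(1/2-\gamma)$ is below $-1$. The only cosmetic difference is that the paper explicitly justifies the reduction to $\gamma>0$ by noting that the claim for some $\gamma$ implies it for every smaller $\gamma$, which you assert as a harmless ``without loss of generality.''
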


\begin{proof} Put $x_n:=\epsilon n^{-\gamma}$. From the proof of Corollary \ref{rcomplconv} we know that $d(x) \ge c x$ for all $x \in (0,\delta]$, confer (\ref{dlb}). If (\ref{gamma}) is valid for some $\gamma$, then a fortiori for each smaller one. Thus we may assume that $\gamma >0$. Then there exists an integer $n_0$ such that $0<x_n \le \min\{\delta, x_0\}$. Infer from Theorem \ref{poly} that
\begin{eqnarray*}
 \mathbb{P}(n^\gamma \sup_{k \ge n}|\hat{m}_k-m|>\epsilon) &=& \mathbb{P}(\sup_{k \ge n}|\hat{m}_k-m|>x_n)
    \le 2^{s+1} B_s d(x_n)^{-s} \mu_s(x_n) n^{-s/2}\\
     &\le& 2^{s+1} B_s c^{-s} x_n^{-s} L n^{-s/2} = D \epsilon^{-s} n^{s(\gamma-1/2)},
\end{eqnarray*}
with constant $D= 2^{s+1} B_s c^{-s} L$. Thus we arrive at
$$
 \sum_{n \ge n_0} n^{r-1} \mathbb{P}(n^\gamma \sup_{k \ge n}|\hat{m}_k-m|>\epsilon) \le D \epsilon^{-s} \sum_{n \ge n_0} n^{r-1+s(\gamma-1/2)} < \infty,
$$
because the exponent $r-1+s(\gamma-1/2) < -1$. This completes our proof.
\end{proof}

Of course $\gamma<0$ or $r<1$ are uninteresting. The extreme constellation $\gamma=0$ and $r=1$ gives complete and in particularly a.s. convergence. Note that our assumptions cover this case, since $(1/2-\gamma)s=s/2>r=1$. To obtain the convergence rate $o(n^{-\gamma})$ with positive $\gamma$ the order $s$ in the uniform moment-condition (\ref{grm}) must be sufficiently high, namely $s> 1/(1/2-\gamma)$.\\

\begin{corollary} If $m \in \mathbb{R}$ is the unique minimizer of $M$ and (\ref{rm}) holds for some $s>2$, then $\hat{m}_n \rightarrow m \; \mathbb{P}$-r quickley for all $r \in (0,s/2)$.
\end{corollary}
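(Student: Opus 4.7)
The plan is to mimic the argument used for Corollary \ref{quick}, but to feed in the polynomial tail bound of Theorem \ref{poly} in place of the exponential bound of Theorem \ref{expb}. Since the goal is to show $\mathbb{E}[L_\epsilon^r]<\infty$ for every $\epsilon>0$, I would use the standard layer-cake identity
\[
 \mathbb{E}[L_\epsilon^r] \le 1 + \sum_{n \ge 1} \mathbb{P}(L_\epsilon^r \ge n).
\]

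First I would reduce to small $\epsilon$. The random variable $L_\epsilon$ is non-increasing in $\epsilon$, so it suffices to verify $\mathbb{E}[L_\epsilon^r]<\infty$ for $\epsilon \in (0,x_0]$, where $x_0$ is the constant from the moment condition (\ref{rm}); otherwise one replaces $\epsilon$ by $\epsilon \wedge x_0$. Then, using the equivalence (\ref{rel}),
\[
 \mathbb{P}(L_\epsilon^r \ge n) = \mathbb{P}(L_\epsilon \ge n^{1/r}) = \mathbb{P}\Bigl(\sup_{k \ge [n^{1/r}]} |\hat{m}_k-m| > \epsilon\Bigr).
\]
Now Theorem \ref{poly} applies with $x=\epsilon$: the uniqueness of $m$ gives $d(\epsilon)>0$, and (\ref{rm}) ensures $\mu_s(\epsilon)<\infty$. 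Inequality (\ref{in3}) therefore yields a constant $C(\epsilon):=2^{s+1} B_s \mu_s(\epsilon) d(\epsilon)^{-s}$ such that
\[
 \mathbb{P}(L_\epsilon^r \ge n) \le C(\epsilon) [n^{1/r}]^{-s/2} \le C(\epsilon) n^{-s/(2r)},
\]
using $[n^{1/r}] \ge n^{1/r}$.

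Finally, the series $\sum_{n \ge 1} n^{-s/(2r)}$ converges exactly when $s/(2r)>1$, i.e.\ when $r<s/2$, which is precisely the hypothesis. Hence $\mathbb{E}[L_\epsilon^r]<\infty$ for every $\epsilon>0$, giving $\hat{m}_n \to m$ $\mathbb{P}$-$r$-quickly. I do not anticipate any significant obstacle: the only subtlety is respecting the range $\epsilon \le x_0$ in which the moment hypothesis is actually assumed, which is handled by the monotonicity of $\epsilon \mapsto L_\epsilon$.
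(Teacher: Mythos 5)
Your proposal is correct and follows essentially the same route as the paper's own proof: the equivalence (\ref{rel}), the bound (\ref{in3}) from Theorem \ref{poly} applied at $x=\epsilon$ together with $[n^{1/r}] \ge n^{1/r}$, convergence of $\sum_n n^{-s/(2r)}$ for $r<s/2$, and the reduction to $\epsilon \le x_0$ via the monotonicity $L_\epsilon \le L_{x_0}$. No gaps.
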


\begin{proof} Using the equivalence (\ref{rel}) it follows from Theorem \ref{poly} that
$$
\mathbb{P}(L_\epsilon^r \ge n)=\mathbb{P}(\sup_{k \ge [n^{1/r}]}|\hat{m}_k-m|>\epsilon) \le 2^{s+1} B_s d(\epsilon)^{-s} \mu_s(\epsilon) n^{-\frac{s}{2r}},
$$
whence $\sum_{n \ge 1}\mathbb{P}(L_\epsilon^r \ge n) < \infty$ for all $\epsilon \in (0,x_0]$. If $\epsilon > x_0$, then
$L_\epsilon \le L_{x_0}$ and so the series is finite actually for all $\epsilon >0$, which in turn ensures that $\mathbb{E}[L_\epsilon^r]<\infty$
for every $\epsilon>0$ as desired.
\end{proof}

Finally, the counterpart of Corollary \ref{atb} reads as follows:\\

\begin{corollary} Assume that (\ref{grm}) holds with $s \ge 2$ and that $m$ is unique. If the sequence $(a_n)$ satisfies (\ref{an}), then there exists a positive constant $K_s$ such that:
\begin{align}
& \limsup_{n \rightarrow \infty} \mathbb{P}(a_n \sup_{k \ge n} (\hat{m}_k-m) >x) \le K_s \delta(x)^{-s} \quad \forall \; x>0.\\
& \limsup_{n \rightarrow \infty} \mathbb{P}(a_n \inf_{k \ge n} (\hat{m}_k-m) <x) \le K_s (-\delta(x))^{-s} \quad \forall \; x<0.\\
& \limsup_{n \rightarrow \infty} \mathbb{P}(a_n \sup_{k \ge n} |\hat{m}_k-m| >x) \le 2 K_s \Delta(x)^2 \quad \forall \; x>0,
\end{align}
where $\Delta(x)=\min\{\delta(x),-\delta(-x)\}$.
\end{corollary}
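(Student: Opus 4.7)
The plan is to apply the three polynomial inequalities (\ref{in1})--(\ref{in3}) from Theorem \ref{poly} with the fixed argument $x$ replaced by the shrinking argument $x/a_n$ (respectively $-x/a_n$ in the infimum case), and then pass to the limit using the defining property (\ref{an}) of the normalizing sequence. The crucial algebraic observation is that the product of the two $n$-dependent factors in (\ref{in1}) can be consolidated as
\begin{equation*}
 n^{-s/2}\bigl(D^+M(m+x/a_n)\bigr)^{-s} = \bigl(\sqrt{n}\, D^+M(m+x/a_n)\bigr)^{-s} = \delta_n(x/a_n)^{-s},
\end{equation*}
which is exactly the quantity controlled by (\ref{an}). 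Set $K_s := 2^s B_s L_s$.

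For the first assertion, fix $x>0$. Since $a_n \to \infty$, there is an $n_0$ with $x/a_n \in (0,x_0]$ for all $n \ge n_0$, so (\ref{in1}) applies and gives
\begin{equation*}
 \mathbb{P}\bigl(a_n \sup_{k \ge n}(\hat m_k-m) > x\bigr) = \mathbb{P}\bigl(\sup_{k \ge n}(\hat m_k-m) > x/a_n\bigr) \le 2^s B_s\, R_s(x/a_n)\, \delta_n(x/a_n)^{-s}.
\end{equation*}
By the uniform moment-assumption (\ref{grm}) we have $R_s(x/a_n) \le L_s$ for $n \ge n_0$, and by (\ref{an}) $\delta_n(x/a_n) \to \delta(x)$. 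Uniqueness of $m$, via Theorem \ref{expb}, ensures $D^+M(m+x/a_n)>0$, so the bound is meaningful; the $\limsup$ is therefore at most $K_s\, \delta(x)^{-s}$ (the inequality is trivially valid if $\delta(x)=0$).

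The second assertion is obtained identically by applying (\ref{in2}) with its parameter replaced by $-x/a_n$ (so that $-x>0$ in the notation of Theorem \ref{poly}), which yields $(-D^+M(m+x/a_n))^{-s} n^{-s/2} = (-\delta_n(x/a_n))^{-s}$ and, via (\ref{an}), the limit $(-\delta(x))^{-s}$. For the third assertion, applying (\ref{in3}) with argument $x/a_n$ gives the factor
\begin{equation*}
 \sqrt{n}\, d(x/a_n) = \min\bigl\{\delta_n(x/a_n),\, -\delta_n(-x/a_n)\bigr\} \longrightarrow \min\{\delta(x),\, -\delta(-x)\} = \Delta(x),
\end{equation*}
while $\mu_s(x/a_n) \le L_s$ for large $n$. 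The extra factor $2^{s+1} = 2\cdot 2^s$ produces the constant $2K_s$ (the printed exponent $\Delta(x)^{2}$ should read $\Delta(x)^{-s}$).

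There is no serious technical obstacle: the argument is essentially a limit computation, and the only real insight needed is the rearrangement that turns the $n^{-s/2}$ factor into the square root appearing in $\delta_n$, so that (\ref{an}) delivers the result directly. One only has to be slightly careful that the convergences $x/a_n \downarrow 0$ place the arguments eventually inside $[-x_0,x_0]$ so that (\ref{grm}) and Theorem \ref{poly} are applicable, and that positivity of $D^+M(m+x/a_n)$ (for $x>0$) and negativity of $D^+M(m-x/a_n)$ hold under uniqueness of $m$, as already noted in the proof of Theorem \ref{expb}.
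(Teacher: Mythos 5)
Your proposal is correct and follows exactly the route the paper intends: the paper's own proof is the single line ``follows from Theorem \ref{poly}, where $K_s = 2^s B_s L_s$,'' and your expansion---substituting $x/a_n$ (resp.\ $-x/a_n$), absorbing $n^{-s/2}$ into $\delta_n(x/a_n)^{-s}$, bounding $R_s$ by $L_s$ via (\ref{grm}), and passing to the limit with (\ref{an})---is precisely the argument used for the analogous Corollary \ref{atb}. Your observation that the printed bound $2K_s\Delta(x)^2$ in the third display should read $2K_s\Delta(x)^{-s}$ is also correct.
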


\begin{proof} The proof follows from Theorem \ref{poly}, where $K_s = 2^s B_s L_s$.
\end{proof}

As a consequence, the conjecture we made in Remark \ref{conj} is also appropriate here.\\

Our last result yields Niemiro's \cite{Niemiro} Theorem 2, however only for dimension $d=1$. We state it here, because in comparison to Niemiro our proof is really short.\\

\begin{proposition} \label{Thm2} Suppose $m \in \mathbb{R}$ is the unique minimizer of $M$ and (\ref{rm}) holds for some $s>1$. Then, for every $x>0$,
$$
 \mathbb{P}(\sup_{k \ge n} |\hat{m}_k-m| >x)=o(n^{-s+1}), \quad n \rightarrow \infty.
$$
\end{proposition}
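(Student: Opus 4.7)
The plan is to mimic the reverse-martingale proof of Theorem \ref{poly}, replacing the exponent $s\ge 2$ by an arbitrary $s>1$ in the Doob-inequality step, and then to obtain the stronger $o$-rate on $\mathbb{E}|T_n|^s$ via truncation. The reduction is unchanged: by Theorem 1 of Ferger \cite{Ferger0} and uniqueness of $m$, the event $\{\hat m_k-m>x\}$ reads as $\{T_k>\eta\}$ with $\eta:=D^+M(m+x)>0$ and $T_k:=k^{-1}\sum_{i=1}^k\xi_i(x)$ the reverse martingale from the proof of Theorem \ref{expb}. Doob's maximal inequality applied to the convex $\phi(y)=|y|^s$ (after the time-reversal on $[n,l]$ and $\sigma$-continuity as $l\to\infty$) gives
\[
\mathbb{P}\bigl(\sup_{k\ge n}|T_k|>\eta\bigr)\le \eta^{-s}\,\mathbb{E}|T_n|^s,
\]
and the analogous infimum bound together with (\ref{supinf}) reduces the proposition to showing $\mathbb{E}|T_n|^s=o(n^{-(s-1)})$.

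The moment estimate is proved by splitting each summand $\xi_i(x)$ into a centered bounded part $\check\xi_i$ and a centered tail part $\tilde\xi_i$ at a truncation level $K$, and estimating the two contributions separately. For the bounded part, Hoeffding's lemma together with the exponential-moment Doob step of Theorem \ref{expb} yields $\mathbb{P}(\sup_{k\ge n}|\check T_k|>\eta/2)\le 2\exp(-c\eta^2 n/K^2)$, which is super-polynomially small in $n$ provided $K^2=o(n)$. For the tail part, Doob's $|y|^s$-inequality combined with the von Bahr--Esseen inequality when $1<s\le 2$, or with Rosenthal's inequality when $s>2$ (using $\mathbb{E}\tilde\xi_i^2\le K^{2-s}\varepsilon_K$ with $\varepsilon_K:=\mathbb{E}|\xi_1(x)|^s 1_{\{|\xi_1(x)|>K\}}$), yields an upper bound of the form $C\,\varepsilon_K\,n^{-(s-1)}$ once $K$ is chosen properly. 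Summing the two contributions and multiplying through by $n^{s-1}$ leaves $\limsup_n n^{s-1}\mathbb{P}(\sup_{k\ge n}|T_k|>\eta)\le C\,\varepsilon_K$, and letting $K\to\infty$ (so that $\varepsilon_K\downarrow 0$) delivers the desired $o$-rate.

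The main obstacle is the regime $s>2$, where Rosenthal's inequality contains a second-moment term of order $n^{-s/2}$ that is strictly weaker than the target $n^{-(s-1)}$ (because $-s/2>-(s-1)$ for $s>2$). Overcoming this requires the truncation level to grow with $n$, the critical choice being $K_n\asymp n^{1/s}$: then $K_n^{-(s-2)s/2}=n^{-(s-2)/2}$ and the Rosenthal second-moment contribution collapses to $n^{-s/2}\cdot n^{-(s-2)/2}\varepsilon_{K_n}^{s/2}=n^{-(s-1)}\varepsilon_{K_n}^{s/2}$, matching the $n^{-(s-1)}$ scale of the other Rosenthal term and tending to zero since $\varepsilon_{K_n}\to 0$ by dominated convergence. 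Simultaneously the bounded summands obey $|\check\xi_i|\le 2K_n=2n^{1/s}$, so the exponential bound on the bounded piece decays as $\exp(-c\eta^2 n^{1-2/s})$, which is still super-polynomial because $1-2/s>0$ for $s>2$. For $1<s\le 2$ a constant truncation level suffices, so the moment estimate goes through uniformly in $s>1$, and the proposition follows by the same two-sided combination (\ref{supinf}) used in the proofs of Theorems \ref{expb} and \ref{poly}. The brevity advertised by the authors stems from the fact that the long large-deviation analysis in Niemiro's original proof is replaced by a single application of Doob's inequality once this moment estimate is in hand.
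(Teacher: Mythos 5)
Your reduction is the same as the paper's: via Theorem 1 of Ferger \cite{Ferger0} and the representation (\ref{rep}), the one-sided event becomes $\{\sup_{k\ge n}T_k>\eta\}$ with $\eta=D^+M(m+x)>0$, and the two sides are combined through (\ref{supinf}). After that point the routes diverge. The paper simply observes that $T_k$ is the sample mean of i.i.d.\ centred variables with finite $s$-th moment and quotes Petrov's Theorems 27 and 28 (a Baum--Katz type rate theorem for $\sup_{k\ge n}|S_k/k|$), which is exactly what makes its proof so short. You instead reprove that rate from scratch: truncate $\xi_i$ at level $K$, handle the bounded part by the reverse-martingale/Hoeffding mechanism of Theorem \ref{expb}, and handle the tail part by Doob's $|y|^s$-inequality combined with von Bahr--Esseen ($1<s\le 2$) or Rosenthal ($s>2$), with the $n$-dependent truncation $K_n\asymp n^{1/s}$ needed to tame Rosenthal's second-moment term. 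The exponent bookkeeping closes: the Rosenthal variance term contributes $n^{-(s-1)}\varepsilon_{K_n}^{s/2}$ and the bounded part decays like $\exp(-c\eta^2 n^{1-2/s})$, both $o(n^{-(s-1)})$. So your argument is a correct, self-contained substitute for the citation; what it buys is independence from Petrov, at the cost of the brevity the paper is explicitly advertising.

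One misstatement should be corrected. You claim the problem ``reduces to showing $\mathbb{E}|T_n|^s=o(n^{-(s-1)})$''. That statement is false for every $s\ge 2$: already at $s=2$ one has $\mathbb{E}|T_n|^2=\sigma^2/n$ exactly, and for $s>2$ the central limit theorem forces $\mathbb{E}|T_n|^s\asymp n^{-s/2}\gg n^{-(s-1)}$. Your actual argument never uses this reduction --- you split the \emph{probability} $\mathbb{P}(\sup_{k\ge n}|T_k|>\eta)$ into a bounded-part event (killed exponentially) and a tail-part event (killed by the moment bound on the truncated tail sum alone), which is precisely why the proof survives. Rephrase the reduction accordingly; as written it is a false lemma that the remainder of your text then quietly ignores.
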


\begin{proof} Observe that
\begin{eqnarray*}
 \mathbb{P}(\sup_{k \ge n} (\hat{m}_k-m) >x)&=&\mathbb{P}(\bigcup_{k \ge n}\{\hat{m}_k-m>x\})=\mathbb{P}(\bigcup_{k \ge n}\{T_k > \eta\})\\
                                            &\le& \mathbb{P}(\sup_{k \ge n} |\frac{1}{k} \sum_{i=1}^k \xi_i|> \eta)=o(n^{-s+1}),
\end{eqnarray*}
where the second equality holds by (\ref{rep}) and the last equality follows from Theorem 28 in Petrov \cite{Petrov}  taking into account the equivalence of (4.1) and (4.4) in Theorem 27 in \cite{Petrov}. Analogously, one obtains that $$\mathbb{P}(\inf_{k \ge n} (\hat{m}_k-m) <-x) = o(n^{-s+1}),$$ whence the assertion follows from (\ref{supinf}).
\end{proof}

Let $\hat{m}_n$ be the \textbf{largest minimizing point} of $M_n$. Then all results above remain valid with $D^+M$ and $D^+h$ replaced by
$D^-M$ and $D^-h$, respectively. This is the case due to the second part of Theorem 1 of Ferger \cite{Ferger0}.



\begin{thebibliography}{7}

\bibitem{Billingsley} P. Billingsley, \textit{Probability and Measure (Second Edition)}, New York, Chichester, Brisbane, Toronto, Singngapore: John Wiley \& Sons, 1986.

\bibitem{Chow} Y. S. Chow and H. Teicher, \textit{Probability Theory. Independence, Interchangeability, Martingales (Third Edition)}, New York: Springer-Verlag, 1997.

\bibitem{Davis} R. A. Davis, K. Knight and J. Liu, \textit{M-estimation for autoregressions with infinite variance},
    Stochastic Process. Appl. \textbf{40} (1992), 145--180.

\bibitem{Durrett} R. Durrett, \textit{Probability: Theory and Examples (Fifth Edition)}, Cambridge University Press, 2019.

\bibitem{Ferger0} D. Ferger, \textit{On semi-continuity and continuity of the smallest and largest minimizing point of real convex functions with applications in probability and statistics}, arXiv:2306.08358   https://doi.org/10.48550/arXiv.2306.08358.

\bibitem{Ferger1} D. Ferger, \textit{Optimal constants in the Marcinkiewicz-Zygmund inequalities}, Statist. Probab. Lett. \textbf{84} (2014), 96--101.


\bibitem{Ferger3} D. Ferger, \textit{Distributional hyperspace-convergence of Argmin-sets in convex M-estimation},
Theor. Probability and Math. Statist. \textbf{109} (2023), 3--35. https://doi.org/10.1090/tpms/1195

\bibitem{Stute} P. G\"{a}nssler and W. Stute, \textit{Wahrscheinlichkeitstheorie}, Berlin, Heidelberg, Germany: Springer-Verlag, 1977.

\bibitem{Geyer} J. Geyer, \textit{On the asymptotics of convex stochastic optimization}, Unpublished
manuscript (1996).

\bibitem{Habermann} S. J. Habermann, \textit{Concavity and estimation},
Ann. Statist. \textbf{17} (1989), 1631--1661.

\bibitem{Hjort} N. L. Hjort and D. Pollard, \textit{Asymptotic for minimizers of convex processes}, Preprint,
Dept. of Statistics, Yale University (1993). arxiv:1107.3806v1

\bibitem{Hsu} P. L. Hsu and H. Robbins, \textit{Complete convergence and the law of large numbers}. Proceedings of the National Academy of Sciences of the United States of America  \textbf{33} (1947), 25--31.

\bibitem{Huber} P. J. Huber, \textit{Robust estimation of a location parameter}, Ann. Math. Statist. \textbf{35} (1964), 73--101.


\bibitem{Liese} F. Liese and K.-J. Mieschke, \textit{Statistical Decision Theory}, New York: Springer Science+Business Media, LLC, 2008.

\bibitem{Massart} P. Massart, \textit{Concentration Inequalities and Model Selection} (J. Picard, ed.), Berlin, Heidelberg, Germany: Springer-Verlag, 2007.

\bibitem{Molchanov} I. Molchanov, \textit{Theory of Random Closed Sets (Second Edition)}, London: Springer-Verlag.

\bibitem{Niculescu} C. P. Niculescu and L.-E. Persson, \textit{Convex Functions and Their Applications: A Contemporary Approach},
New York: Springer Science+Business Media, Inc., 2006.

\bibitem{Niemiro} W. Niemiro, \textit{Asymptotic for M-estimators defined by convex minimization},
Ann. Statist. \textbf{20} (1992), 1514--1533.

\bibitem{Petrov} V.V. Petrov, \textit{Sums of independend random variables}, Berlin, Heidelberg, New York: Springer-Verlag.

\bibitem{Rockafellar} R. T. Rockafellar, \textit{Convex Analysis}, Princeton, New Jersey: Princeton University Press, 1970.

\bibitem{Serfling} R. J. Serfling, \textit{Approximation Theorems of Mathematical Statistics}, New York, Chichester, Brisbane, Toronto, Singapore: John Wiley \& Sons, 1980.

\bibitem{Smirnov} N. V. Smirnov, \textit{Limit distributions for the terms of a variational series},
Mathematical Society Translations, \textbf{67}, 1952.

\bibitem{Strassen} V. Strassen, \textit{Almost sure behavior of sums of independent random variables and martingales}, In: \textit{Proceedings of the Fifth Berkeley Symposium on Mathematical Statistics and Probability. Volume II, Part I: Probability Theory} (L. Le Cam, J. Neyman, ed.) 315--343.
    Berkley, Los Angeles: University of California Press, 1967.

\bibitem{Tarta1} A. G. Tartakovsky, \textit{Asymptotic optimality of certain multihypothesis sequential tests: Non-i.i.d. case}, Stat. Inference Stoch. Process \textbf{1} 1998, 265--295.

\bibitem{Tarta2} A. G. Tartakovsky, \textit{Quick and Complete Convergence in the Law of Large Numbers with Applications to Statistics},
Mathematics \textbf{11} 2023, 1--30.  https://doi.org/10.3390/math11122687


\end{thebibliography}

\end{document}